\newtheorem{thm}{Theorem}[section]
\newtheorem{prop}[thm]{Proposition}
\newtheorem{lem}[thm]{Lemma}
\newtheorem{conj}[thm]{Conjecture}
\newtheorem{claim}[thm]{Claim}
\theoremstyle{definition}
\newtheorem{definition}[thm]{Definition}
\theoremstyle{remark}
\numberwithin{equation}{section}
\newcommand\lcm{\operatorname{lcm}}
\begin{document}

\title[Effective nonvanishing for WCI of codimension $2$]{Effective nonvanishing for weighted complete intersections of codimension two}

\date{September 12, 2024}

\author{Chen Jiang}
\address{Shanghai Center for Mathematical Sciences \& School of Mathematical Sciences, Fudan University, Shanghai, 200438, China}
\email{chenjiang@fudan.edu.cn}

\author{Puyang Yu}
\address{Shanghai Center for Mathematical Sciences, Fudan University, Shanghai, 200438, China}
\email{21110840008@m.fudan.edu.cn}


\begin{abstract}
We show Kawamata's effective nonvanishing conjecture (also known as the Ambro--Kawamata nonvanishing conjecture) holds for quasismooth weighted complete intersections of codimension $2$. Namely, for a quasismooth weighted complete intersection $X$ of codimension $2$ and an ample Cartier divisor $H$ on $X$ such that $H-K_X$ is ample, the linear system $|H|$ is nonempty. 
\end{abstract}

\keywords{weighted complete intersection, nonvanishing}
\subjclass[2010]{14M10, 14J40, 14C20}
\maketitle


\section{Introduction} 

In this paper, we are interested in the following Kawamata's effective nonvanishing conjecture (also known as the Ambro--Kawamata nonvanishing conjecture):
\begin{conj}[{\cite{Ambro, Kawamata}}]\label{conj}
 Let $(X, \Delta)$ be a projective klt pair and $H$ be a nef Cartier 
 divisor on $X$ such that $H-K_X-\Delta$ is ample, then $|H|\neq \emptyset.$
\end{conj}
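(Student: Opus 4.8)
The plan is to reduce nonvanishing of $|H|$ to a positivity statement and then attack it by induction on $\dim X$ through the minimal model program. First I would use that $A := H - K_X - \Delta$ is ample. Since $(X,\Delta)$ is klt and $A$ is nef and big, the Kawamata--Viehweg vanishing theorem for klt pairs gives $H^i(X,\OO_X(H)) = 0$ for all $i>0$, so $h^0(X,\OO_X(H)) = \chi(X,\OO_X(H))$. Hence $|H|\neq\emptyset$ is equivalent to $\chi(X,\OO_X(H)) > 0$, and the entire difficulty is to produce a single global section. The base case $\dim X \le 1$ is immediate: a klt curve is smooth, nefness gives $\deg H \ge 0$, and $\deg H > \deg(K_X+\Delta)\ge 2g-2$ forces $\deg H \ge 2g-1$ when $g\ge 1$; Riemann--Roch then yields $h^0(X,\OO_X(H)) \ge 1$ in every case (and for a $0$-dimensional center $\OO_W\neq 0$ already gives a section).

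For the inductive step I would split according to whether $H$ is big. \emph{Big case.} If $H^n>0$ with $n=\dim X$, then $H$ is nef and big, and I would run the multiplier-ideal machinery à la Angehrn--Siu--Kawamata: using bigness of $H$ and ampleness of $A$, construct an effective $\bQ$-divisor $\Gamma \sim_{\bQ} (1-\epsilon)H$ whose multiplier ideal cuts out a minimal non-klt centre $W$ through a general point, arranged so that $A-\Gamma$, and hence $H-(K_X+\Delta+\Gamma)$, remains ample. Nadel vanishing then makes $H^0(X,\OO_X(H)) \to H^0(W,\OO_X(H)\otimes\OO_W)$ surjective, while Kawamata's subadjunction endows $W$ with a klt structure $(W,\Delta_W)$ for which $H|_W - K_W - \Delta_W$ is again ample; induction on $\dim W<n$ produces the section to be lifted. \emph{Non-big case.} If $H^n=0$, then for curves $C$ with $H\cdot C=0$ one has $(K_X+\Delta)\cdot C = -A\cdot C<0$, so such curves span $(K_X+\Delta)$-negative extremal rays; contracting these via the cone theorem gives $f\colon X\to Z$ with $H\equiv f^*H_Z$, so $H^0(X,\OO_X(H))=H^0(Z,\OO_Z(H_Z))$, the fibres are log Fano by ampleness of $A$, and the canonical bundle formula rewrites $K_X+\Delta$ through $f^*(K_Z+\Delta_Z+M_Z)$, reducing the problem to the lower-dimensional klt pair $(Z,\Delta_Z+M_Z)$ with $H_Z-(K_Z+\Delta_Z+M_Z)$ ample.

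The main obstacle I expect lies in closing this induction uniformly across singularities and across the two regimes. In the big case the construction is quantitative: cutting out a non-klt centre with the required minimality needs lower bounds on $\Vol(H)$ and on the restricted intersection numbers that are not guaranteed merely by $H$ Cartier and $A$ ample, and the centre $W$ can be badly singular, so the induction hypothesis must already be available for a broad class of (possibly non-normal, slc) pairs. In the non-big case the reduction is not a single clean contraction: one may have to run several steps of the $(K_X+\Delta)$-MMP, during which $H$ must be shown to descend and stay nef, and the moduli part $M_Z$ from the canonical bundle formula is only $\bQ$-Cartier, so preserving the Cartier and ampleness hypotheses on $(Z,\Delta_Z+M_Z)$ is delicate. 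It is precisely this absence of a singularity-insensitive induction, together with the lack of any general mechanism forcing $\chi(X,\OO_X(H))>0$, that keeps Conjecture~\ref{conj} open in general and that motivates passing to settings where $H$, $K_X$, and the Hilbert polynomial are all computable in closed form.
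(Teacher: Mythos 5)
You have set out to prove the statement labelled Conjecture~\ref{conj}, but this is precisely the open Ambro--Kawamata conjecture: the paper offers no proof of it and explicitly records that even the $3$-dimensional case is wildly open. What the paper actually proves (Theorem~\ref{th1}) is the very special case of quasismooth weighted complete intersections of codimension $2$, and it does so by a route entirely disjoint from yours: the Cartier hypothesis is converted into divisibility conditions on the weights via \cite[Propositions 3.4, 3.6]{PST17}, the inequality $h>d_1+d_2-\sum a_i$ coming from ampleness of $H-K_X$ is compared with the arithmetic inequality of Proposition~\ref{main prop}, and a section is then produced as a monomial via the Frobenius coin problem (Lemma~\ref{lem frob}). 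So there is no ``paper's own proof'' of the conjecture to measure your attempt against, and your text, as you yourself concede in its final paragraph, is a strategy sketch rather than a proof.

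The sketch does contain genuine, identifiable failure points, which are exactly where the known partial results stop. In the big case, the Angehrn--Siu--Kawamata construction of a $\bQ$-divisor $\Gamma\sim_{\bQ}(1-\epsilon)H$ with a minimal non-klt centre through a general point requires quantitative positivity --- lower bounds on $\Vol(H)$ and on degrees of $H$ along potential centres of order roughly $n^n$ --- which a nef and big Cartier $H$ with $H-K_X-\Delta$ ample need not satisfy ($\Vol(H)$ can equal $1$); this is the step that confines \cite{Kawamata} to dimension $2$ and \cite{Xie, Horing, BH10} to special threefolds, and your proposal supplies no substitute for it. In the non-big case the reduction is worse than ``delicate'': the basepoint-free theorem does give a contraction $f\colon X\to Z$ with $H=f^*H_Z$, but the canonical bundle formula does not apply to $K_X+\Delta$ itself, since $(K_X+\Delta)|_F\equiv -A|_F$ is anti-ample, not torsion, on the fibres $F$. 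To obtain an lc-trivial fibration you must absorb $A$ into the boundary, replacing $\Delta$ by $\Delta+\Gamma$ with $\Gamma\sim_{\bQ}A$; but then $K_X+\Delta+\Gamma\sim_{\bQ}H=f^*H_Z$, hence $H_Z\sim_{\bQ}K_Z+\Delta_Z+M_Z$ and the difference $H_Z-(K_Z+\Delta_Z+M_Z)$ is numerically trivial rather than ample --- the strict positivity your induction needs is destroyed, and you land in a log Calabi--Yau-type boundary case that the inductive hypothesis cannot reach (on top of $M_Z$ being only b-nef in general). Neither branch of the induction closes, so the argument does not prove the conjecture; it is an accurate map of why it is hard, which is consistent with the paper treating it as open and attacking only the weighted-complete-intersection case by arithmetic means.
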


The assumption of this conjecture is quite natural in birational geometry, as it appears in the Kawamata--Viehweg vanishing theorem and the basepoint free theorem, but the conclusion seems surprising and there is so far no much evidence supporting this conjecture. 

Historically, Ambro \cite{Ambro} raised up this conjecture as a question, 
and Kawamata proved the $2$-dimensional case \cite{Kawamata} and made it a conjecture, while even the $3$-dimensional case is wildly open, see \cite{Xie, Horing, BH10} for some special cases. 
A very special case of this conjecture predicts that an ample Cartier divisor on a smooth Fano manifold or a smooth Calabi--Yau manifold has a non-trivial global section, which is already interesting and highly non-trivial but wildly open, see \cite{CJ20} for related discussion on this direction. 
Recently, the first author confirmed this conjecture for all hyperk\"ahler manifolds. 

Many important examples come from weighted complete intersections in weighted projective spaces, see for example \cite{IF, CJL, ETW, TW, Totaro}. 
Pizzato, Sano, and Tasin confirmed Conjecture~\ref{conj} for weighted complete intersections which are Fano or Calabi--Yau or which are of codimension $1$. The main result of this paper is to deal with weighted complete intersections of codimension $2$.

\begin{thm}\label{th1}
 Let $X=X_{d_1,d_2}\subset \mathbb{P}=\mathbb{P}(a_0,...,a_n)$ be a well-formed quasismooth weighted complete intersection of codimension $2$ which is not a linear cone where $n\geq 3$. If $H$ is an ample Cartier divisor on $X$ such that $H-K_X$ is ample, then $|H|$ is not empty.
\end{thm}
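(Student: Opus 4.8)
The plan is to convert the geometric nonvanishing into a purely combinatorial positivity statement about the Hilbert series, and then to extract that positivity from the structure of quasismooth codimension-two weighted complete intersections together with the Cartier hypothesis. I may first assume $\dim X = n-2 \ge 3$, i.e. $n\ge 5$, since the cases $\dim X\le 2$ are already covered by Kawamata's theorem \cite{Kawamata} (with the curve case $\dim X=1$ being immediate from Riemann--Roch).

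\emph{Reduction to the Hilbert series.} For $\dim X\ge 3$ a Grothendieck--Lefschetz argument gives $\mathrm{Cl}(X)=\mathbb{Z}\cdot\mathcal{O}_X(1)$, so the ample Cartier divisor is $H\sim\mathcal{O}_X(h)$ for a unique positive integer $h$. Since $X$ is quasismooth, well-formed, and not a linear cone, it is projectively normal and arithmetically Cohen--Macaulay, with homogeneous coordinate ring $R=k[x_0,\dots,x_n]/(f_1,f_2)$, $\deg x_i=a_i$, $\deg f_j=d_j$, satisfying $h^0(X,\mathcal{O}_X(m))=\dim_k R_m$ for all $m\ge 0$. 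Hence $|H|\neq\emptyset \iff \dim_k R_h>0 \iff [t^h]P_X(t)>0$, where, by the Koszul resolution of the regular sequence $f_1,f_2$,
\[
P_X(t)=\frac{(1-t^{d_1})(1-t^{d_2})}{\prod_{i=0}^{n}(1-t^{a_i})},\qquad
[t^h]P_X=N(h)-N(h-d_1)-N(h-d_2)+N(h-d_1-d_2),
\]
with $N(m)=\#\{c\in\mathbb{Z}_{\ge 0}^{n+1}:\sum_i c_i a_i=m\}$. So it suffices to prove this alternating sum is positive.

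\emph{Translating the two hypotheses.} By the adjunction formula for well-formed quasismooth WCI, $\omega_X\cong\mathcal{O}_X(d_1+d_2-\sum_i a_i)$, so "$H-K_X$ ample" becomes the numerical threshold $h\ge d_1+d_2-\sum_i a_i+1$. The Cartier hypothesis is the second, essential input: $X$ carries only cyclic quotient singularities along the coordinate strata of $\mathbb{P}(a_0,\dots,a_n)$ that it meets, and $\mathcal{O}_X(h)$ is locally free precisely when $h$ lies in the relevant local Picard subgroup at each such stratum, which forces divisibilities of $h$ by the local indices. I would record these divisibility constraints explicitly, since they are exactly what rule out the counterexamples that occur for a merely ample $\mathbb{Q}$-Cartier $H$.

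\emph{The combinatorial heart.} I would organize the estimate via a codimension-one reduction. Writing $M(m)=N(m)-N(m-d_1)=\dim_k\big(k[x]/(f_1)\big)_m$ for the hypersurface ring of $Y=\{f_1=0\}$, one has $[t^h]P_X=M(h)-M(h-d_2)$, so the target is the strict inequality $M(h)>M(h-d_2)$. The threshold gives $h-d_2\ge d_1-\sum_i a_i+1$, so both $h$ and $h-d_2$ sit strictly above the $a$-invariant $d_1-\sum_i a_i$ of the complete-intersection (hence Gorenstein) ring $k[x]/(f_1)$; there the functional equation
\[
P_Y(1/t)=(-1)^{n}\,t^{\sum_i a_i-d_1}\,P_Y(t)
\]
identifies $M$ with the Hilbert quasi-polynomial and lets me compare $M(h),M(h-d_2)$ with values in the symmetric low-degree range. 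Since $\dim\mathrm{Proj}\,k[x]/(f_1)=n-1\ge 2$, the quasi-polynomial has positive-degree leading term, so $M$ ultimately grows; the entire difficulty is confined to $h$ only slightly above $d_1+d_2-\sum_i a_i$, where $M$ can plateau (this is precisely why $n\ge 3$ and the Cartier condition are indispensable). Here I would feed in the Iano--Fletcher classification \cite{IF} of which monomials must occur in $f_1,f_2$ for quasismoothness, in order to exhibit a degree-$h$ monomial surviving in $R$.

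\textbf{Main obstacle.} The hard part is exactly this last step: proving the alternating sum is positive uniformly across the Iano--Fletcher list of quasismooth codimension-two types in the delicate near-threshold regime. There the Cartier divisibilities must be combined with the monomial-existence conditions from quasismoothness to guarantee simultaneously that $h$ is representable (so $N(h)>0$) and that enough independent representations survive the subtraction. I expect this to require a somewhat lengthy but structured case analysis, generalizing the codimension-one argument of Pizzato--Sano--Tasin, with the plateau behaviour of $M$ near the $a$-invariant controlled by the Gorenstein symmetry above.
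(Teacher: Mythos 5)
Your preliminary reductions are individually sound: projective normality gives $|H|\neq\emptyset \iff [t^h]P_X(t)>0$, adjunction gives the threshold $h>d_1+d_2-\sum_i a_i$, and the rewriting $[t^h]P_X=M(h)-M(h-d_2)$ is correct. But the proposal has a genuine gap, and you name it yourself: the ``main obstacle'' is not a technical loose end, it is the entire content of the theorem, and nothing in your outline closes it. Proving $M(h)>M(h-d_2)$ just above the $a$-invariant means controlling cancellation between representation numbers $N(h)$, $N(h-d_1)$, $N(h-d_2)$, $N(h-d_1-d_2)$; Gorenstein symmetry of $k[x]/(f_1)$ gives no monotonicity of $M$ in that range (Hilbert functions of weighted hypersurface rings genuinely plateau and oscillate there), and there is no finite ``Iano--Fletcher list'' of quasismooth codimension-two types in arbitrary dimension over which a case analysis could run: \cite{IF} and \cite{PST17} provide monomial-existence criteria, not a classification. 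As written, the plan cannot be completed without a new idea, and that new idea is precisely what the paper supplies.

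The paper's route avoids counting altogether, which is why it succeeds. It suffices to represent $h$ as a nonnegative integer combination of the weights: any monomial of degree $h$ then gives a nonzero section, since $X$ is irreducible and lies in no coordinate hyperplane, so no monomial can belong to $(f_1,f_2)$. Pairs and triples of weights enter because for two or three weights the Frobenius number admits the explicit $\lcm$ bounds of Lemma~\ref{lem frob} (Brauer's theorem \cite{B42}). The hypotheses are then converted into exact divisibilities: quasismoothness plus $a_i\nmid h$ forces $a_i\mid d_1$ or $a_i\mid d_2$ \cite[Proposition 3.4]{PST17}, and the Cartier hypothesis forces $a_I\mid h$ for all $|I|=3$ and, for $|I|=2$ with $a_I\nmid h$, $a_I\mid d_1$ and $a_I\mid d_2$ \cite[Proposition 3.6]{PST17}. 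The heart of the matter is the purely arithmetic Proposition~\ref{main prop}, whose conclusion is calibrated exactly so that
\[
h> d_1+d_2-\sum_{i=0}^na_i\geq f_1({\mathbf{a}}, s)+f_2({\mathbf{a}}, s)-\sum_{i=0}^{n}a_i
\]
beats the Frobenius bound for a suitable pair or triple, yielding representability of $h$. In your outline the Cartier divisibilities are ``recorded'' but never used quantitatively, and there is no substitute for Proposition~\ref{main prop}; that $\lcm$/$\gcd$ proposition, not the Hilbert-series formalism, is where the difficulty of the theorem lives.
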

For definitions and basic facts of weighted complete intersections and weighted projective spaces, we refer to \cite{Dol, IF, PST17}.

\section{Auxiliary results}

\begin{definition}\label{def aI}
 Given a sequence of positive integers $a_0,...,a_n$, for a subset $I\subset \{0,...,n\}$, we define
 $$
 a_I:=\gcd_{i\in I}(a_i).
 $$
 Here we set $\lcm\{\emptyset\}=1$ by convention.
\end{definition}
By abusing the notation, we often omit the curly brackets in $a_I$, for example, we use $a_{0,1,2}$ instead of $a_{\{0,1,2\}}$.

In the following we gather some useful facts. Some of them are well-known and we omit the proof. 
\begin{lem}\label{gcd lemma} In Definition~\ref{def aI},
 $\gcd(a_I, a_J)=a_{I\cup J}$ for subsets $I, J\subset \{0,...,n\}$.
\end{lem}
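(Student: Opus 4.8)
The plan is to prove the identity by comparing the sets of integer divisors of the two sides, using the defining universal property of the greatest common divisor: for any integers $x_1,\dots,x_k$ and any integer $d$, one has $d\mid \gcd(x_1,\dots,x_k)$ if and only if $d\mid x_\ell$ for every $\ell$. Since two positive integers that are divisible by exactly the same set of integers must coincide, it suffices to check that $\gcd(a_I,a_J)$ and $a_{I\cup J}$ have identical divisors.

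First I would fix an arbitrary integer $d$ and unwind the left-hand side: $d\mid \gcd(a_I,a_J)$ holds if and only if $d\mid a_I$ and $d\mid a_J$. Applying the universal property to each of $a_I=\gcd_{i\in I}a_i$ and $a_J=\gcd_{j\in J}a_j$, this is equivalent to the conjunction of ``$d\mid a_i$ for all $i\in I$'' and ``$d\mid a_j$ for all $j\in J$'', which is precisely ``$d\mid a_k$ for all $k\in I\cup J$''. Applying the universal property once more, now to $a_{I\cup J}=\gcd_{k\in I\cup J}a_k$, the latter is equivalent to $d\mid a_{I\cup J}$. Thus $d$ divides the left-hand side if and only if it divides the right-hand side, and equality follows.

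Since all the $a_i$ are positive integers, every gcd appearing is positive (for nonempty index sets), so the comparison of divisor sets genuinely determines the value and no sign ambiguity arises. This argument is essentially formal and I do not expect any real obstacle; the only point requiring minor care is the degenerate case where $I$ or $J$ is empty, which should be treated in accordance with the stated convention for the empty gcd, together with the equally formal remark that the identity is insensitive to the overlap $I\cap J$, since divisibility by $a_k$ is simply demanded for each $k$ in the union. Should an inductive presentation be preferred, one could instead reduce to the three-term associativity relation $\gcd(\gcd(x,y),z)=\gcd(x,y,z)$ and induct on $|I|+|J|$, but the divisor-set comparison above is more direct and avoids the bookkeeping.
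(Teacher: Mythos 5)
Your proof is correct. The paper offers no proof of this lemma at all (it is listed among facts that are ``well-known and we omit the proof''), and your divisor-set argument via the universal property of the gcd --- $d\mid\gcd(x_1,\dots,x_k)$ if and only if $d\mid x_\ell$ for every $\ell$ --- is exactly the standard argument that the authors are implicitly invoking, so there is nothing to reconcile. One small caution: the convention stated in Definition~\ref{def aI} concerns $\lcm$ of the empty set (needed later for $h_k({\mathbf{a}},s)$), not the gcd, so the ``stated convention for the empty gcd'' you appeal to does not actually appear in the paper; if one wants the lemma to cover empty index sets, one should adopt $a_{\emptyset}=0$, under which your divisor-set comparison goes through verbatim since every integer divides $0$.
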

\begin{lem} Let $a_1, \dots, a_n, b_1,\dots, b_m$ be positive integers, then
 \[\gcd(\lcm_{i}(a_i), \lcm_{j}(b_j))=\lcm_{i,j}(\gcd(a_i, b_j)).\]
\end{lem}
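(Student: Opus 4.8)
The plan is to verify the identity one prime at a time by passing to $p$-adic valuations. For a prime $p$ and a positive integer $x$, write $v_p(x)$ for the exponent of $p$ in the factorization of $x$. A positive integer is determined by the collection of all its $p$-adic valuations, and we have the standard formulas $v_p(\gcd(x,y))=\min(v_p(x),v_p(y))$ and $v_p(\lcm(x,y))=\max(v_p(x),v_p(y))$, both extending to any finite number of arguments. Applying $v_p$ to each side of the claimed equality therefore reduces the problem to showing, for every prime $p$, that
\[
\min\Bigl(\max_i v_p(a_i),\ \max_j v_p(b_j)\Bigr)=\max_{i,j}\min\bigl(v_p(a_i),\ v_p(b_j)\bigr).
\]

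Setting $\alpha_i:=v_p(a_i)$ and $\beta_j:=v_p(b_j)$, this becomes the purely combinatorial min-max identity
\[
\min\Bigl(\max_i \alpha_i,\ \max_j \beta_j\Bigr)=\max_{i,j}\min(\alpha_i,\beta_j)
\]
for nonnegative integers $\alpha_i,\beta_j$, which I would prove directly. Let $A=\max_i\alpha_i$, attained at some index $i_0$, and $B=\max_j\beta_j$, attained at some $j_0$; by symmetry we may assume $A\le B$, so that the left-hand side equals $A$. For every pair $(i,j)$ we have $\min(\alpha_i,\beta_j)\le\alpha_i\le A$, whence the right-hand side is at most $A$; and taking the single pair $(i_0,j_0)$ gives $\min(\alpha_{i_0},\beta_{j_0})=\min(A,B)=A$, whence the right-hand side is at least $A$. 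Both sides thus equal $A$.

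There is essentially no obstacle here: once the reduction to valuations is in place, the remaining content is the elementary min-max identity above, and the only points requiring any care are the observation that agreement of all $p$-adic valuations forces equality of the two integers, together with the valuation formulas for $\gcd$ and $\lcm$, both of which are immediate.
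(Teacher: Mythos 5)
Your proof is correct and complete: the reduction to $p$-adic valuations is valid, and your verification of the min-max identity $\min(\max_i\alpha_i,\max_j\beta_j)=\max_{i,j}\min(\alpha_i,\beta_j)$ covers both inequalities properly. For comparison, the paper gives no proof at all — this lemma is among those it declares well-known and omits — so your valuation argument is exactly the kind of standard justification the authors had in mind, and there is nothing further to reconcile.
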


\begin{lem}\label{lcm a+b}
 Let $a, b$ be positive integers. If $a\nmid b$ and $b\nmid a$,
 then 
 \[\lcm(a,b)\geq 2\max\{a, b\}\geq a+b.\]
\end{lem}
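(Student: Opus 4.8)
The second inequality $2\max\{a,b\}\geq a+b$ is immediate, since $\max\{a,b\}\geq a$ and $\max\{a,b\}\geq b$ give $2\max\{a,b\}=\max\{a,b\}+\max\{a,b\}\geq a+b$. So the entire content lies in the first inequality $\lcm(a,b)\geq 2\max\{a,b\}$, which I would prove directly using the standard identity $\lcm(a,b)\cdot\gcd(a,b)=ab$.

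Since both the hypotheses and the conclusion are symmetric in $a$ and $b$, I would assume without loss of generality that $a\geq b$, so that $\max\{a,b\}=a$ and the goal becomes $\lcm(a,b)\geq 2a$. Writing $d:=\gcd(a,b)$ and substituting the identity above, this is equivalent to $ab/d\geq 2a$, that is, to $b\geq 2d$. The key step is then the observation that $d$ is a \emph{proper} divisor of $b$: one always has $d\mid b$, and if $d=b$ then $b=\gcd(a,b)\mid a$, contradicting the hypothesis $b\nmid a$. Since a proper divisor of a positive integer $b$ cannot exceed $b/2$ (the integer quotient $b/d$ is at least $2$), we obtain $d\leq b/2$, hence $b\geq 2d$, which is exactly what was needed.

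I do not expect any real obstacle here, as the statement is elementary number theory; the only point requiring a moment of care is the fact that a proper divisor of $b$ is at most $b/2$. It is worth noting that after the reduction to $a\geq b$ the argument uses only the hypothesis $b\nmid a$; the second hypothesis $a\nmid b$ merely ensures the symmetry (it forces $a\neq b$ in the borderline case and is in fact automatic once $a\geq b$ and $b\nmid a$ are assumed).
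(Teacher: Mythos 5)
Your proof is correct. Note, however, that the paper never proves this lemma at all: it appears in Section 2 among the facts that the authors describe as ``well-known'' and whose proofs are omitted, so there is no argument in the paper to compare yours against. Your route through the identity $\lcm(a,b)\cdot\gcd(a,b)=ab$ is sound, including the key observation that $\gcd(a,b)$ is a \emph{proper} divisor of $b$ (else $b\mid a$) and hence at most $b/2$; your side remark that, once $a\geq b$ is assumed, the hypothesis $a\nmid b$ is automatic is also accurate. For what it is worth, there is an even shorter variant that avoids the $\gcd$--$\lcm$ identity entirely: with $a=\max\{a,b\}$, the number $\lcm(a,b)$ is a positive multiple of $a$, and it cannot equal $a$ itself, since that would make $a$ a common multiple of $a$ and $b$, i.e.\ $b\mid a$, contradicting the hypothesis; therefore $\lcm(a,b)\geq 2a$. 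Either argument is perfectly acceptable for a statement at this level.
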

\begin{lem}\label{lcm+gcd}
 Let $a, b$ be positive integers, 
 then 
 \[\lcm(a,b)+\gcd(a,b)\geq a+b.\]
\end{lem}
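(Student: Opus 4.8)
The plan is to reduce the inequality to a nonnegativity statement about a product of two nonnegative integers, exploiting the standard identity relating $\lcm$ and $\gcd$. Writing $d=\gcd(a,b)$, I will use the well-known fact that $\lcm(a,b)\cdot\gcd(a,b)=ab$, so that $\lcm(a,b)=ab/d$. Substituting this into the desired inequality, the claim becomes
\[
\frac{ab}{d}+d\geq a+b.
\]

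Since $d>0$, I would clear denominators by multiplying through by $d$, reducing the goal to showing
\[
ab+d^2\geq ad+bd,
\]
which rearranges to $(a-d)(b-d)\geq 0$. Because $d=\gcd(a,b)$ divides both $a$ and $b$, both factors are nonnegative integers (indeed $a\geq d$ and $b\geq d$), so the product is nonnegative and the inequality follows immediately.

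Alternatively, and perhaps more transparently, I would factor out $d$ at the start: writing $a=d\alpha$ and $b=d\beta$ with $\gcd(\alpha,\beta)=1$, one has $\lcm(a,b)=d\alpha\beta$, and after dividing by $d$ the claim collapses to $\alpha\beta+1\geq\alpha+\beta$, i.e.\ $(\alpha-1)(\beta-1)\geq 0$, which holds since $\alpha,\beta\geq 1$.

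There is no real obstacle here, as this is an elementary arithmetic fact; the only point requiring any care is invoking the identity $\lcm(a,b)\gcd(a,b)=ab$ (or equivalently the coprime factorization), after which the argument is a one-line nonnegativity check. I expect the author's proof to be essentially this same two-line computation.
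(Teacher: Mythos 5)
Your proof is correct: both the reduction to $(a-d)(b-d)\geq 0$ via the identity $\lcm(a,b)\gcd(a,b)=ab$ and the coprime factorization $a=d\alpha$, $b=d\beta$ are valid, and each step is justified. Note that the paper gives no proof at all for this lemma --- it is listed among the ``well-known'' facts whose proofs are omitted --- so your two-line argument is exactly the kind of standard verification the authors had in mind, and there is no competing approach to compare against.
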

\begin{lem}\label{1-a-b-c}
 Let $a_1, a_2, a_3$ be distinct positive integers such that $a_1,a_2,a_3\geq 2$.
 Let $k_1, k_2, k_3$ be positive integers such that $k_1k_2k_3>1$,
 then 
 \[1-\sum_{i=1}^3\frac{1}{k_ia_i}\geq 0.\]
\end{lem}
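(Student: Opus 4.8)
The plan is to reduce the inequality to a single worst-case numerical check. I would first restate the goal as $\sum_{i=1}^3 \frac{1}{k_i a_i} \le 1$ and observe that the hypothesis $k_1k_2k_3 > 1$ says exactly that the $k_i$ are not all equal to $1$; equivalently, there is an index $j$ with $k_j \ge 2$. This hypothesis is indispensable, since for $k_1=k_2=k_3=1$ and $(a_1,a_2,a_3)=(2,3,4)$ one already has $\frac{1}{2}+\frac{1}{3}+\frac{1}{4}=\frac{13}{12}>1$, so the conclusion would be false without it.

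Next I would discard all the slack coming from large $k_i$, keeping only the improvement forced by the single index $j$. Since $\frac{1}{k_i a_i}\le \frac{1}{a_i}$ for every $i$ while $\frac{1}{k_j a_j}\le \frac{1}{2a_j}$ for the chosen $j$, summing gives
\[
\sum_{i=1}^3 \frac{1}{k_i a_i} \le \sum_{i=1}^3 \frac{1}{a_i} - \frac{1}{2a_j}.
\]
Thus it suffices to bound the right-hand side by $1$ for this fixed $j$.

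Finally I would exploit that $a_1,a_2,a_3$ are distinct integers $\ge 2$. Sorting them as $b_1<b_2<b_3$ forces $b_1\ge 2$, $b_2\ge 3$, $b_3\ge 4$. Since $a_j\le b_3$, the subtracted penalty obeys $\frac{1}{2a_j}\ge \frac{1}{2b_3}$, so
\[
\sum_{i=1}^3 \frac{1}{a_i} - \frac{1}{2a_j} \le \frac{1}{b_1}+\frac{1}{b_2}+\frac{1}{2b_3} \le \frac{1}{2}+\frac{1}{3}+\frac{1}{8} = \frac{23}{24} < 1,
\]
which closes the argument.

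The proof is elementary and I do not anticipate a genuine difficulty; the one point that requires care is the bookkeeping in the reduction step. One must apply the factor-$\tfrac12$ improvement at a \emph{single} index where $k_j\ge 2$ (using the remaining terms only through $\frac{1}{k_i a_i}\le\frac{1}{a_i}$), and then recognize that the bound $\sum_i \frac{1}{a_i}-\frac{1}{2a_j}$ is largest precisely when the halved index carries the \emph{largest} value, whence the substitution $a_j\le b_3$. Getting the directions of these inequalities right, and confirming that $\{2,3,4\}$ is indeed the extremal choice of $a$-values, is the only subtle part.
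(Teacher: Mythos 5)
Your proof is correct, but it takes a genuinely different route from the paper. The paper splits on the size of $\max\{a_i\}$: when $\max\{a_i\}\geq 6$ it simply discards all the $k_i$ (via $\tfrac{1}{k_ia_i}\leq\tfrac{1}{a_i}$) and uses the sharp bound $\tfrac12+\tfrac13+\tfrac16=1$, and when $\max\{a_i\}\leq 5$ it asserts a case-by-case check of the finitely many triples in $\{2,3,4,5\}$ (where $\{2,3,4\}$ and $\{2,3,5\}$ are exactly the cases that need the hypothesis $k_1k_2k_3>1$). You instead give a single uniform estimate: invoke the hypothesis once to halve the term at one index $j$, sort the $a_i$ as $b_1<b_2<b_3$ so that $b_1\geq 2$, $b_2\geq 3$, $b_3\geq 4$, and bound
\[
\sum_{i=1}^3\frac{1}{k_ia_i}\;\leq\;\frac{1}{b_1}+\frac{1}{b_2}+\frac{1}{2b_3}\;\leq\;\frac12+\frac13+\frac18=\frac{23}{24}<1,
\]
where the key observation (correctly justified) is that the penalty $-\tfrac{1}{2a_j}$ is weakest when $a_j=b_3$. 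What your approach buys: it eliminates the paper's unwritten finite verification entirely and yields a strictly stronger conclusion (the sum is at most $\tfrac{23}{24}$, so the lemma's inequality is in fact strict under its hypotheses). What the paper's split buys: its first case isolates where the constant $0$ would be attained if the $k$-hypothesis were dropped (namely $\{2,3,6\}$ with all $k_i=1$), which makes transparent why the hypothesis is needed only for small $a_i$; your counterexample $\{2,3,4\}$ with all $k_i=1$ makes the same point more briefly. Both proofs are complete; yours is arguably the cleaner one to include verbatim.
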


\begin{proof}
 If $\max\{a_i\}\geq 6$, then 
 \[1-\sum_{i=1}^3\frac{1}{k_ia_i}\geq 1-\sum_{i=1}^3\frac{1}{a_i}\geq 1-\frac{1}{2}-\frac{1}{3}-\frac{1}{6}=0.\]
 If $\max\{a_i\}\leq 5$, then the inequality can be proved case by case. 
\end{proof}

\begin{lem}\label{lem frob}
 Let $a_0, a_1, a_2, h$ be positive integers. 
 \begin{enumerate}
 \item If $a_{0,1}|h$ and $h> \lcm(a_0,a_1)-a_0-a_1$, then $h\in \mathbb{Z}_{\geq 0}a_0+\mathbb{Z}_{\geq 0}a_1$.
\item If $a_{0,1,2}|h$ and \[h> \lcm(a_0,a_1)+\lcm(a_{0,1},a_2)-a_0-a_1-a_2,
 \]
 then $h\in \mathbb{Z}_{\geq 0}a_0+\mathbb{Z}_{\geq 0}a_1+\mathbb{Z}_{\geq 0}a_2$.
 \end{enumerate}
\end{lem}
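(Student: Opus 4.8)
The plan is to treat the two parts separately, deducing part (2) from part (1) by a reduction in the number of generators. Part (1) is the classical Sylvester--Frobenius statement for two generators, phrased so as to allow $\gcd(a_0,a_1)>1$. First I would set $g=a_{0,1}=\gcd(a_0,a_1)$ and write $a_0=ga_0'$, $a_1=ga_1'$ with $\gcd(a_0',a_1')=1$; since $g\mid h$ I write $h=gh'$. The membership $h\in\mathbb{Z}_{\geq 0}a_0+\mathbb{Z}_{\geq 0}a_1$ is then equivalent to $h'\in\mathbb{Z}_{\geq 0}a_0'+\mathbb{Z}_{\geq 0}a_1'$, and the hypothesis $h>\lcm(a_0,a_1)-a_0-a_1$ becomes $h'>a_0'a_1'-a_0'-a_1'$. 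For the coprime pair I would run the standard residue argument: choose the unique $m$ with $0\leq m\leq a_1'-1$ and $ma_0'\equiv h'\pmod{a_1'}$; then $h'-ma_0'$ is a multiple of $a_1'$ which, using $ma_0'\leq(a_1'-1)a_0'$ and the bound on $h'$, is strictly greater than $-a_1'$ and therefore non-negative, giving $h'=ma_0'+na_1'$ with $n\geq 0$.

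For part (2) the idea is to peel off the generator $a_2$ so as to land in the two-generator situation of part (1). Writing $g=a_{0,1}$, the hypothesis $a_{0,1,2}=\gcd(a_{0,1},a_2)\mid h$ makes the congruence $pa_2\equiv h\pmod{g}$ solvable, so I can pick the minimal $p_0\geq 0$ with $g\mid(h-p_0a_2)$; minimality forces $p_0<g/a_{0,1,2}=\lcm(a_{0,1},a_2)/a_2$, whence $p_0a_2\leq\lcm(a_{0,1},a_2)-a_2$. Combining this with the hypothesis gives
\[
h-p_0a_2\geq h-\lcm(a_{0,1},a_2)+a_2>\lcm(a_0,a_1)-a_0-a_1,
\]
which is exactly the bound needed to apply part (1) to $h-p_0a_2$, a multiple of $a_{0,1}$. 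This places $h-p_0a_2\in\mathbb{Z}_{\geq 0}a_0+\mathbb{Z}_{\geq 0}a_1$, and adding back $p_0a_2$ yields the claim.

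The main obstacle is keeping track of the degenerate cases where $\lcm(a_0,a_1)-a_0-a_1<0$, i.e. where one of $a_0,a_1$ divides the other (by Lemma~\ref{lcm a+b} the bound is non-negative precisely when neither divides the other). In such a case the quantity $h-p_0a_2$ produced above is only guaranteed to exceed a negative number, so I must separately confirm it is non-negative before invoking part (1): being a multiple of $a_{0,1}$ strictly larger than $-a_{0,1}$, it is in fact $\geq 0$, and the value $0$ is covered by the convention that the empty sum represents $0$. Verifying that these boundary situations do not break the reduction, together with the exact bookkeeping that makes the displayed inequality match the hypothesis on the nose, is the only delicate point; the remainder is the classical argument.
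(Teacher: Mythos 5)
Your proof is correct, but it takes a genuinely different route from the paper's. The paper's own proof is essentially a citation: part (1) is declared well-known (the classical Sylvester--Frobenius fact), and part (2) is reduced to the case $a_{0,1,2}=1$ by dividing through and then quoted as a special case of a theorem of Brauer \cite{B42}, via the discussion in \cite{PST17}. You instead give a self-contained elementary argument: the standard residue-class proof of (1) after factoring out $g=a_{0,1}$, and for (2) a reduction to (1) obtained by subtracting the minimal non-negative multiple $p_0a_2$ lying in the right congruence class modulo $a_{0,1}$, with the key bound $p_0a_2\leq\lcm(a_{0,1},a_2)-a_2$ coming from the solvability of $pa_2\equiv h\pmod{a_{0,1}}$ (which uses exactly the hypothesis $a_{0,1,2}\mid h$). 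This peeling step is in fact the inductive heart of Brauer's original proof, so you have effectively reproved the cited result in the three-generator case rather than invoking it. Your handling of the degenerate cases is also sound and is the one genuinely delicate point: when one of $a_0,a_1$ divides the other, the chain $h-p_0a_2>\lcm(a_0,a_1)-a_0-a_1\geq-a_{0,1}$ (the last inequality being Lemma~\ref{lcm+gcd}) combined with $a_{0,1}\mid(h-p_0a_2)$ forces $h-p_0a_2\geq 0$, and the value $0$ causes no trouble since $0\in\mathbb{Z}_{\geq 0}a_0+\mathbb{Z}_{\geq 0}a_1$. What your approach buys is a complete, verifiable proof with no external dependencies; what the paper's buys is brevity and the fact that Brauer's theorem generalizes immediately to any number of generators, which is the form conjecturally relevant to higher-codimension cases (cf.\ the discussion of \cite[Conjecture~4.8]{PST17}).
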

\begin{proof}
This is related to the Frobenius coin problem. We refer the reader to \cite[Question~4.7, Conjecture~4.8]{PST17}.
(1) is well-known. For (2), we may assume that $a_{0,1,2}=1$ by dividing everything by $a_{0,1,2}$. Then the result is a special case of a result by Brauer \cite{B42}, see for example the discussion after \cite[Conjecture~4.8]{PST17}.
\end{proof}

\section{An arithmetic result}
 The goal of this section is to prove the following proposition:
\begin{prop}\label{main prop}
 Let $h$ be a positive integer and ${\mathbf{a}}=(a_0,...,a_n)$ be a sequence of positive integers where $n\geq 3$ and let $s$ be an integer with $-1\leq s\leq n$. Assume that $a_i\nmid h$ for any $i$ and $a_I | h$ for all $|I|=3$. Here see Definition~\ref{def aI} for the definition of $a_I$. Denote
 \begin{align*}
 \Sigma_1({\mathbf{a}}, s){}&:=\{ \{i,j\} \mid a_{i,j}\nmid h,s< i<j\leq n\},\\
 \Sigma_2({\mathbf{a}}, s){}&:=\{ \{i,j\} \mid a_{i,j}\nmid h,0\leq i<j\leq s\},\\
 h_k({\mathbf{a}}, s) {}&:= \lcm_{I\in \Sigma_k({\mathbf{a}}, s)}(a_{I}) \text{ for } k=1,2,\\
 f_1({\mathbf{a}}, s){}&:= \lcm_{0\leq j\leq s}(a_j,h_1({\mathbf{a}}, s)),\\
 f_2({\mathbf{a}}, s){}&:=\lcm_{s<j\leq n}(a_j,h_2({\mathbf{a}}, s)).
 \end{align*}
 Then one of the following assertions holds.
 \begin{enumerate}
 \item There exists a couple $\{u,v\}\subset \{0,...,n\}$ such that $a_{u,v} |h$ and
 \[
 f_1({\mathbf{a}}, s)+f_2({\mathbf{a}}, s)-\sum_{i=0}^{n}a_i\geq \lcm(a_u,a_v)-a_u-a_v.
 \]
 \item There exists a triple $\{u,v,w\}\subset \{0,...,n\}$ such that
 \[
 f_1({\mathbf{a}}, s)+f_2({\mathbf{a}}, s)-\sum_{i=0}^{n}a_i\geq \lcm(a_u,a_v)+\lcm(a_{u,v},a_w)-a_u-a_v-a_w.
 \]
 \end{enumerate}
\end{prop}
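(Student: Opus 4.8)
The plan is to translate every divisibility hypothesis into $p$-adic valuations, and then track how the splitting index $s$ distributes the ``bad pairs'' between $f_1$ and $f_2$. For each prime $p$ set $B_p:=\{\,i : v_p(a_i)>v_p(h)\,\}$. Since $v_p(a_{i,j,k})=\min\{v_p(a_i),v_p(a_j),v_p(a_k)\}$, the hypothesis $a_I\mid h$ for all $|I|=3$ is equivalent to $|B_p|\le 2$ for every $p$, while $a_i\nmid h$ says that every index lies in some $B_p$. A pair $\{i,j\}$ is a \emph{bad pair} (i.e.\ $a_{i,j}\nmid h$) exactly when $B_p=\{i,j\}$ for some $p$. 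Relative to $L:=\{0,\dots,s\}$ and $U:=\{s+1,\dots,n\}$, each bad pair is of one of three types: both endpoints in $U$ (so it lies in $\Sigma_1$ and its $\gcd$ divides $h_1$, hence $f_1$), both in $L$ (so it lies in $\Sigma_2$, dividing $h_2$ and $f_2$), or \emph{crossing}. This trichotomy is the combinatorial backbone of the argument, and it explains the asymmetric ``cross'' shape of the definitions.

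Next I would observe that, writing $D:=f_1+f_2-\sum_i a_i$ and introducing the block excesses $g_1:=f_1-\sum_{i\in L}a_i$ and $g_2:=f_2-\sum_{j\in U}a_j$, one has $D=g_1+g_2$; so both assertions reduce to a lower bound of the form $g_1+g_2\ge\lcm(a_u,a_v)-a_u-a_v$ (resp.\ the triple threshold of Lemma~\ref{lem frob}(2)), since the two thresholds are precisely the Frobenius bounds into which this proposition is designed to feed. To bound the excesses I would expand each $f_k$ by the iterated form of Lemma~\ref{lcm+gcd}: ordering the arguments of $f_2=\lcm(\{a_j\}_{j\in U}\cup\{h_2\})$ and using the distributivity $\gcd(c,\lcm(\cdots))=\lcm_k\gcd(c,\cdot)$, one obtains $f_2\ge h_2+\sum_{j\in U}a_j-\sum(\text{corrections})$, where each correction is an $\lcm$ of pairwise $\gcd$'s. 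The three types of bad pairs now behave very differently: the $\gcd$ of a good pair divides $h$ and is controlled; the $\gcd$ of a within-$U$ bad pair is large but is compensated by the contribution of $h_1$ sitting inside $f_1$ (symmetrically for within-$L$ pairs and $h_2$ inside $f_2$); and a crossing bad pair produces \emph{no} correction at all, since its two endpoints lie in different lcm's. This is exactly where the choice of split pays off.

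Finally I would make the selection. In the generic case a good pair $\{u,v\}$ realizing the dominant lcm contribution of one block gives assertion (1): such a pair is incomparable (otherwise $a_{u,v}$ would equal the smaller $a_u$, forcing $a_u\mid h$), so Lemma~\ref{lcm a+b} yields $\lcm(a_u,a_v)\ge a_u+a_v$ and the bound $g_1+g_2\ge\lcm(a_u,a_v)-a_u-a_v$ follows from the excess estimates. When the pair one is forced to use is itself bad, so (1) is unavailable, I would instead pass to a triple $\{u,v,w\}$: here $a_{u,v,w}\mid h$ holds automatically, and the extra summand $\lcm(a_{u,v},a_w)$ in Lemma~\ref{lem frob}(2) is exactly what absorbs the badness, giving assertion (2).

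The hard part will be the quantitative bookkeeping of the second paragraph: one must check that the deficiency produced by each within-block bad pair in one $f_k$ is at least matched by the gain that the corresponding $h_1$ or $h_2$ contributes to the other $f_k$. This needs a prime-by-prime comparison, using that a shared prime power exceeding $h$ appears to full strength in the relevant $h_k$ but only to order $\le v_p(h)$ in the opposite block, and it forces a case analysis according to how many bad pairs meet at a common index and according to the degenerate configurations ($s=-1$ or $s=n$, empty $\Sigma_k$, or the case of no bad pairs, where a private large prime power of some $a_i$ alone drives the lcm). Organizing these cases so that either a good pair yields (1) or a fallback triple yields (2) is the crux of the proof.
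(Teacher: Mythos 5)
Your setup is sound as far as it goes: the dictionary between the divisibility hypotheses and the sets $B_p=\{\,i: v_p(a_i)>v_p(h)\,\}$ is correct ($|B_p|\le 2$ for every $p$; bad pairs are exactly the sets $B_p$ of size two), and your trichotomy of bad pairs relative to the split $\{0,\dots,s\}\sqcup\{s+1,\dots,n\}$ faithfully reflects the definitions of $\Sigma_1,\Sigma_2$. But everything after that is a plan rather than a proof, and the plan has two concrete holes. First, the inequality you want from iterating Lemma~\ref{lcm+gcd}, namely $f_2\ge h_2+\sum_{j\in U}a_j-\sum(\text{corrections})$ with corrections that are lcm's of pairwise gcd's, is neither stated precisely nor proved; worse, your claim that a correction coming from a good pair ``divides $h$ and is controlled'' is unjustified, since a divisor of $h$ can be arbitrarily large compared with the thresholds $\lcm(a_u,a_v)-a_u-a_v$ you must beat ($h$ is unbounded in the proposition). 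Second, the compensation statement---that the loss created by each within-block bad pair in one $f_k$ is matched by the gain that the corresponding $h_k$ contributes to the other block---is essentially the whole content of the proposition, and you explicitly defer it (``the hard part will be\dots''). It cannot be established by a uniform pair-by-pair argument: the paper's Lemma~\ref{le} exhibits configurations with $n=2$ where the conclusion genuinely fails, and Subcase 3-2 of the $n=3$ case requires the construction of the pairwise coprime integers $a,b,c$ in Claim~\ref{cl1} together with Lemma~\ref{1-a-b-c} precisely because naive compensation breaks down in the regime where every $a_i$ divides $\lcm_{j\neq i}(a_j)$. Your sketch has no mechanism for detecting or handling these exceptional configurations.

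For comparison, the paper does not attempt a direct valuation-theoretic attack at all. It first reduces to the situation where no two weights in the same block divide one another (Lemma~\ref{lem star}, by induction on the number of violating pairs, using the monotonicity Lemma~\ref{lem s-1 s}), then settles $n=3$ by an extensive case analysis, and finally runs an induction on $n$ in which an index is deleted from $\mathbf{a}$ and either the inductive hypothesis or the degenerate classification of Lemma~\ref{le} is invoked, with Lemma~\ref{lcm a+b} supplying the quantitative gain at each deletion. If you wish to salvage your approach, the minimal missing ingredients are a precise statement and proof of your correction formula, and a separate argument for the case where all weights divide the lcm of the others---which is exactly where the difficulty of this proposition is concentrated.
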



\begin{lem}\label{le}
 When $n=2$, the conclusion of Proposition~\ref{main prop} fails only if the following conditions are satisfied:
 \begin{enumerate}
 \item $s=-1$ or $2$;
 \item there exists exactly one $J\subset \{0,1,2\}$ with $|J|=2$ such that $a_{J}\nmid h$;
 \item $a_i|\lcm_{j\neq i}(a_j)$ for any $i\in J$.
 \end{enumerate}
\end{lem}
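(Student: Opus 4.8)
The plan is to prove the contrapositive: I assume the conclusion of Proposition~\ref{main prop} fails (for $n=2$) and deduce (1)--(3). Write $L=\lcm(a_0,a_1,a_2)$, and call a pair $\{i,j\}\subset\{0,1,2\}$ \emph{bad} if $a_{i,j}\nmid h$ and \emph{good} otherwise. First I would dispose of $s\in\{0,1\}$. Computing $f_1,f_2$ directly from the definitions (for $s=0$, splitting on whether $\{1,2\}$ is good or bad) gives $f_1+f_2-\sum_i a_i=\lcm(a_1,a_2)-a_1-a_2$ when $a_{1,2}\mid h$, and $\lcm(a_0,a_{1,2})+\lcm(a_1,a_2)-\sum_i a_i$ when $a_{1,2}\nmid h$; in the first case assertion~(1) of Proposition~\ref{main prop} holds with equality for the couple $\{1,2\}$, and in the second assertion~(2) holds with equality for the triple $\{0,1,2\}$ (distinguishing $w=0$). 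As $s=1$ is symmetric, failure forces $s\in\{-1,2\}$, which is~(1); for such $s$ the same computation yields the clean formula $f_1+f_2-\sum_i a_i=L+H-\sum_i a_i$, where $H$ is the $\lcm$ of the numbers $a_{i,j}$ over all bad pairs.

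Set $\delta:=L+H-\sum_i a_i$. I would reformulate the two assertions of Proposition~\ref{main prop} as inequalities: for a good pair $P=\{u,v\}$ with remaining index $w$, assertion~(1) reads $(A_P)\colon L+H\ge\lcm(a_u,a_v)+a_w$; for any pair $P$, assertion~(2) reads $(B_P)\colon L+H\ge\lcm(a_u,a_v)+\lcm(a_{u,v},a_w)$. Since $\lcm(a_{u,v},a_w)\ge a_w$ we have $(B_P)\Rightarrow(A_P)$, so it suffices, in each case below, to exhibit a good pair satisfying $(A_P)$ or any pair satisfying $(B_P)$. I would also record one divisibility input: if two good pairs share an index $i$, so $a_{i,j},a_{i,l}\mid h$, then $\gcd(a_i,\lcm(a_j,a_l))=\lcm(a_{i,j},a_{i,l})\mid h$ by gcd--lcm distributivity, and since $a_i\nmid h$ this forces $a_i\nmid\lcm(a_j,a_l)$.

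I would then bound the number of bad pairs. If there are none, the input above gives $a_i\nmid\lcm_{j\ne i}a_j$ for every $i$, hence $L\ge2\lcm(a_u,a_v)$ for every pair; choosing $w$ with $a_w$ minimal gives $\lcm(a_u,a_v)\ge a_w$, so $(A_{\{u,v\}})$ holds. If there are exactly two bad pairs they share an index $i$, and the good pair $\{j,l\}$ satisfies $H=\lcm(a_{i,j},a_{i,l})=\gcd(a_i,\lcm(a_j,a_l))$; Lemma~\ref{lcm+gcd} applied to $a_i$ and $\lcm(a_j,a_l)$ is precisely $(A_{\{j,l\}})$. If all three pairs are bad, then each $a_{i,j}$ divides $\lcm(a_u,a_v)$, so $H\mid\lcm(a_u,a_v)$, and a short manipulation shows that $\lcm(a_u,a_v)+\lcm(a_{u,v},a_w)\le L+H$ reduces to exactly this divisibility, giving $(B_P)$. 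In every case the conclusion holds, so failure forces exactly one bad pair, which is~(2).

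Finally, let $\{i,j\}$ be the unique bad pair with remaining index $l$, so $\{i,l\}$ and $\{j,l\}$ are good. Suppose $a_i\nmid\lcm(a_j,a_l)$. Goodness of $\{i,l\}$ rules out $\lcm(a_j,a_l)\mid a_i$ (otherwise $a_l\mid a_i$ and $a_{i,l}=a_l\nmid h$, making $\{i,l\}$ bad), so $a_i$ and $\lcm(a_j,a_l)$ each fail to divide the other, and Lemma~\ref{lcm a+b} gives $L=\lcm(a_i,\lcm(a_j,a_l))\ge a_i+\lcm(a_j,a_l)$, whence $(A_{\{j,l\}})$ holds; symmetrically $a_j\nmid\lcm(a_i,a_l)$ forces the conclusion. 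Thus failure forces $a_i\mid\lcm(a_j,a_l)$ and $a_j\mid\lcm(a_i,a_l)$, which is~(3). The main obstacle is organizing this case analysis and isolating the two structural facts that drive it: that $H$ divides every pairwise $\lcm$ (the three-bad-pair case) and that the goodness of the pairs through $l$ excludes the degenerate divisibility so that Lemma~\ref{lcm a+b} applies (the one-bad-pair case, where condition~(3) is exactly the remaining possibility).
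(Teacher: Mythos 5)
Your proposal is correct, and its skeleton is the same as the paper's: dispose of the middle values of $s$ by direct computation and symmetry, then for $s\in\{-1,2\}$ split on the number of bad pairs; your two-bad-pair case (via Lemma~\ref{lcm+gcd} applied to $a_i$ and $\lcm(a_j,a_l)$) and your one-bad-pair case (via Lemma~\ref{lcm a+b}, with condition (3) as exactly the surviving possibility) coincide with the paper's Subcases 2-3 and 2-4. You deviate in two local places. First, for zero bad pairs the paper just cites \cite[Lemma 6.1]{PST17}, whereas you argue directly ($a_i\nmid\lcm_{j\neq i}(a_j)$ for all $i$, hence $L\geq 2\lcm(a_u,a_v)$, then take $a_w$ minimal); this is valid and makes the lemma self-contained. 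Second, for three bad pairs the paper applies Lemma~\ref{lcm+gcd} to $M:=\lcm(a_u,a_v)$ and $N:=\lcm(a_{u,v},a_w)$, using that $\lcm(M,N)=L$ and, by gcd--lcm distributivity, $\gcd(M,N)=\lcm(a_{u,v},a_{u,w},a_{v,w})=H$, which yields $M+N\leq L+H$ with no divisibility hypothesis at all. You instead claim the inequality ``reduces to'' $H\mid M$ via an unspelled manipulation. That claim is true but needs care: the naive observation that $H$ divides both $M$ and $N$ only gives $H\mid\gcd(M,N)$, which is the wrong direction (one needs $\gcd(M,N)\leq H$). A correct version of your manipulation: setting $g:=\gcd(M,a_w)=\lcm(a_{u,w},a_{v,w})$, one has $L=Ma_w/g$, $H=a_{u,v}g/a_{u,v,w}$, and $N=Ha_w/g$, so
\[
L+H-M-N=\frac{(M-H)(a_w-g)}{g}\geq 0,
\]
where $g\mid a_w$ and $H\leq M$ follows from $H\mid M$. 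So your route works once this is written out, but the paper's identity-plus-Lemma~\ref{lcm+gcd} argument is shorter and exactly parallels your own treatment of the two-bad-pair case.
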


 \begin{proof}

To simplify the notation, we omit $({\mathbf{a}}, s)$ from all symbols, for example, $f_1({\mathbf{a}}, s)$ is simplified by $f_1$.
Note that the statement of Proposition~\ref{main prop} is symmetric between $s$ and $n-1-s$. By symmetry, we may assume that $s\geq 1$. In particular, $\Sigma_1=\emptyset$ and $h_1=1$. We split the discussion into $2$ cases according to the value of $s$.

\medskip

 \textbf{Case 1}: $s=1$. 
 
 If $a_{0,1}|h$, then $\Sigma_2=\emptyset$ and $h_2=1$. Hence 
 \begin{align*}
 f_1+f_2-\sum_{i=0}^{2}a_i &= \lcm(a_0,a_1)+a_2-\sum_{i=0}^{2}a_i \\
 &= \lcm(a_0,a_1)-a_0-a_1.
 \end{align*}
 If $a_{0,1}\nmid h$, then $h_2=a_{0,1}$ and 
 \[
 f_1+f_2-\sum_{i=0}^{2}a_i=\lcm(a_0,a_1)+\lcm(a_{0,1},a_2)-a_0-a_1-a_2.
 \]
 So in this case the conclusion of Proposition~\ref{main prop} holds.

 \medskip
 
 \textbf{Case 2}: $s=2$.
 
 In this case, $f_1=\lcm(a_0,a_1,a_2)$ and $f_2=h_2$. 
 We split the discussion into $4$ subcases by the size of $\Sigma_2$. Clearly $|\Sigma_2|\leq 3$.
 

 \medskip

 \textbf{Subcase 2-1}: $|\Sigma_2|=0$. 
 
 In this case, $\lcm_{i\neq j}(a_{i,j})|h$ and $f_2=1$. Then the conclusion of Proposition~\ref{main prop} holds by \cite[Lemma 6.1]{PST17}. 

 \medskip

 \textbf{Subcase 2-2}: $|\Sigma_2|=3$.

 In this case, $f_2=h_2=\lcm(a_{0,1},a_{0,2},a_{1,2})$.
 
 Note that 
 \[
f_1=\lcm(a_0,a_1,a_2)=\lcm(a_0,a_1,a_{0,1},a_2)
 \]
 and
 \[\gcd(\lcm(a_0,a_1),\lcm(a_{0,1},a_2))=\lcm(a_{0,1},a_{0,2},a_{1,2})=f_2.\]
Hence by Lemma~\ref{lcm+gcd}, 
 \[f_1+f_2\geq \lcm(a_0,a_1)+\lcm(a_{0,1},a_2).\]
 So the conclusion of Proposition~\ref{main prop} holds.

 
 
 
 \medskip

 \textbf{Subcase 2-3}: $|\Sigma_2|=2$. 
 
 We may assume that $\{0,1\}\not\in \Sigma_2$ without loss of generality. Then \[f_2=h_2=\lcm(a_{0,2},a_{1,2})=\gcd(\lcm(a_0,a_1),a_2).\] 
Hence by Lemma~\ref{lcm+gcd}, 
 \[f_1+f_2\geq \lcm(a_0,a_1)+a_2.\]
 So the conclusion of Proposition~\ref{main prop} holds as $a_{0,1}|h$ in this case.

 
 \medskip

 \textbf{Subcase 2-4}: $|\Sigma_2|=1$. 
 
 We may assume that $\{0,1\}\in \Sigma_2$ without loss of generality.
 To finish the proof of the lemma, it suffices to show that the conclusion of Proposition~\ref{main prop} holds if there exists an index $i_0\in \{0,1\}$ such that $a_{i_0}\nmid \lcm_{j\neq i_0}(a_j)$. We may assume that $i_0=0$ without loss of generality. 
 On the other hand, $a_2\nmid a_0$ since otherwise $a_{2}=a_{0,2}$ divides $h$ which contradicts the definition of $\Sigma_2$.
 Then by Lemma~\ref{lcm a+b},
 \begin{align*}
 \lcm(a_0,a_1,a_2) \geq & a_0+\lcm(a_1,a_2).
 \end{align*}
 Thus 
 \begin{align*}
 f_1+f_2-\sum_{i=0}^{2}a_i>& a_0+\lcm(a_1,a_2)-\sum_{i=0}^{2} a_i\\
 = &\lcm(a_1,a_2)-a_1-a_2
 \end{align*}
and the conclusion of Proposition~\ref{main prop} holds as $a_{1,2}|h$ in this case.
 \end{proof}

\begin{lem}\label{lem s-1 s}
Under the assumption of Proposition~\ref{main prop}, 
if $s\geq 0$, then
$f_1({\mathbf{a}}, s) \geq f_1({\mathbf{a}}, s-1)$;
if moreover $a_s|f_{2}({\mathbf{a}}, s)$, then $f_2({\mathbf{a}}, s) \geq f_2({\mathbf{a}}, s-1)$.
\end{lem}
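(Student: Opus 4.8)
The plan is to prove both assertions in the sharper form of divisibility relations, namely
\[
f_1({\mathbf{a}}, s-1)\mid f_1({\mathbf{a}}, s)
\quad\text{and, under }a_s\mid f_2({\mathbf{a}}, s),\quad
f_2({\mathbf{a}}, s-1)\mid f_2({\mathbf{a}}, s);
\]
since all quantities are positive integers, each divisibility immediately yields the corresponding inequality. The first move is to unwind the definitions and record the effect of replacing $s$ by $s-1$. Writing $f_1({\mathbf{a}}, s)=\lcm(a_0,\dots,a_s,h_1({\mathbf{a}}, s))$ and $f_2({\mathbf{a}}, s)=\lcm(a_{s+1},\dots,a_n,h_2({\mathbf{a}}, s))$, I would note the two opposite monotonicities that drive everything: as $s$ drops to $s-1$ the generating index range for $f_1$ loses only the top element $s$, while $\Sigma_1$ \emph{gains} exactly the pairs $\{s,j\}$ with $s<j\le n$ and $a_{s,j}\nmid h$; conversely, the range for $f_2$ gains the element $s$, while $\Sigma_2$ \emph{loses} the pairs $\{i,s\}$, so that $\Sigma_2({\mathbf{a}}, s-1)\subseteq\Sigma_2({\mathbf{a}}, s)$.

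For the $f_1$ statement I would check that each generator of $f_1({\mathbf{a}}, s-1)$ divides $f_1({\mathbf{a}}, s)$. The generators $a_0,\dots,a_{s-1}$ divide $f_1({\mathbf{a}}, s)$ trivially. For $h_1({\mathbf{a}}, s-1)$, every $a_I$ with $I\in\Sigma_1({\mathbf{a}}, s)$ already divides $h_1({\mathbf{a}}, s)\mid f_1({\mathbf{a}}, s)$, and each newly appearing contribution $a_{s,j}=\gcd(a_s,a_j)$ divides $a_s$, which is itself one of the generators of $f_1({\mathbf{a}}, s)$ (here is where it matters that the range $0\le j\le s$ in $f_1({\mathbf{a}}, s)$ includes the index $s$). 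Hence $h_1({\mathbf{a}}, s-1)\mid f_1({\mathbf{a}}, s)$ and the divisibility follows.

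For the $f_2$ statement the contribution from $h_2$ is automatic: since $\Sigma_2({\mathbf{a}}, s-1)\subseteq\Sigma_2({\mathbf{a}}, s)$ we get $h_2({\mathbf{a}}, s-1)\mid h_2({\mathbf{a}}, s)\mid f_2({\mathbf{a}}, s)$, and the generators $a_{s+1},\dots,a_n$ divide $f_2({\mathbf{a}}, s)$ directly. The only generator of $f_2({\mathbf{a}}, s-1)$ not covered by this is $a_s$, which does \emph{not} divide $f_2({\mathbf{a}}, s)$ in general — and this is precisely the asymmetry that the extra hypothesis $a_s\mid f_2({\mathbf{a}}, s)$ is there to supply. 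With it, all generators divide $f_2({\mathbf{a}}, s)$ and the divisibility is complete. I do not expect a genuine obstacle here; the only point requiring care is to keep the two opposite inclusion directions of $\Sigma_1$ and $\Sigma_2$ straight and to notice that this asymmetry is exactly what forces the hypothesis $a_s\mid f_2({\mathbf{a}}, s)$ to be needed for the second inequality but not for the first.
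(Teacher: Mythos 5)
Your proposal is correct and follows essentially the same route as the paper: both establish the sharper divisibility relations $f_1({\mathbf{a}}, s-1)\mid f_1({\mathbf{a}}, s)$ and $f_2({\mathbf{a}}, s-1)\mid f_2({\mathbf{a}}, s)$ by tracking how the generating indices and the sets $\Sigma_1, \Sigma_2$ change when $s$ drops to $s-1$, with the key observations that each new contribution $a_{s,j}$ to $h_1$ divides $a_s$ and that the extra generator $a_s$ of $f_2({\mathbf{a}}, s-1)$ is exactly what the hypothesis $a_s\mid f_2({\mathbf{a}}, s)$ handles. Your write-up merely spells out generator by generator what the paper compresses into the statements that $h_1({\mathbf{a}}, s-1)$ divides $\lcm(h_1({\mathbf{a}}, s), a_s)$ and $f_2({\mathbf{a}}, s-1)$ divides $\lcm(f_2({\mathbf{a}}, s), a_s)$.
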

\begin{proof}
By definition, $f_1({\mathbf{a}}, s-1)=\lcm_{0\leq j\leq s-1}(a_j,h_1({\mathbf{a}}, s-1))$ and $h_1({\mathbf{a}}, s-1)$ divides $\lcm(h_1({\mathbf{a}}, s), a_s)$, hence $f_1({\mathbf{a}}, s-1)|f_1({\mathbf{a}}, s)$.

  By definition, $f_2({\mathbf{a}}, s-1)$ divides $\lcm (f_{2}({\mathbf{a}}, s), a_s)$, so $f_2({\mathbf{a}}, s-1)| f_{2}({\mathbf{a}}, s)$ if $a_s|f_{2}({\mathbf{a}}, s)$.  
\end{proof}

\begin{lem}\label{lem star}
In order to prove Proposition~\ref{main prop}, 
 we may assume further that the following condition holds:
\begin{enumerate}\label{red2}
 \item[($\star$)] $a_i\nmid a_j$ and $a_j\nmid a_i$ for any $ 0\leq i<j\leq s$ or $s<i<j\leq n$. 
\end{enumerate}
\end{lem}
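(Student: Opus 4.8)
The plan is to argue by induction on the number of weights, reducing to condition ($\star$) by deleting a redundant weight. Since the statement of Proposition~\ref{main prop} is symmetric under $s\leftrightarrow n-1-s$, I may assume that ($\star$) is violated inside the first block, say $a_i\mid a_j$ with $0\le i<j\le s$. I then delete the divisor $a_i$, passing to the shorter sequence $\mathbf{a}'$ obtained by removing the $i$-th weight and lowering the cut-off to $s-1$; the base case of the induction is $n=2$, controlled by Lemma~\ref{le}. First I check that $\mathbf{a}'$ inherits the hypotheses of the proposition: the conditions $a_k\nmid h$ survive trivially, and $a_I\mid h$ for every $3$-subset $I$ survives since such $I$ remain $3$-subsets of the original index set.

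The heart of the argument is a clean comparison of the two instances, resting on one observation. Since $a_i\mid a_j$ we have $a_{i,j}=a_i$, and then for every $l\notin\{i,j\}$ the triple hypothesis gives $a_{i,l}=\gcd(a_{i,j},a_l)=a_{\{i,j,l\}}\mid h$. Hence the only pair in $\Sigma_2(\mathbf{a},s)$ containing the index $i$ is $\{i,j\}$, whose value is $a_{i,j}=a_i\nmid h$. Consequently $\Sigma_1$ and $h_1$ are unchanged, as they involve only the second block, while $h_2(\mathbf{a},s)=\lcm(h_2(\mathbf{a}',s-1),a_i)$. Feeding this into the definitions, and using that $a_i$ is redundant in $f_1$ because $a_i\mid a_j$ with $j$ still present, I obtain the two identities
\[
f_1(\mathbf{a},s)=f_1(\mathbf{a}',s-1),\qquad f_2(\mathbf{a},s)=\lcm\bigl(f_2(\mathbf{a}',s-1),a_i\bigr).
\]

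With these identities the deletion changes the left-hand side of the proposition by
\[
\Bigl(f_1(\mathbf{a},s)+f_2(\mathbf{a},s)-\sum_{\text{orig}}a_k\Bigr)-\Bigl(f_1(\mathbf{a}',s-1)+f_2(\mathbf{a}',s-1)-\sum_{\text{red}}a_k\Bigr)=\lcm\bigl(f_2',a_i\bigr)-f_2'-a_i,
\]
where $f_2'=f_2(\mathbf{a}',s-1)$. Whenever $a_i\nmid f_2'$ and $f_2'\nmid a_i$, Lemma~\ref{lcm a+b} makes this quantity nonnegative, so the left-hand side does not decrease; since any witnessing couple or triple for the reduced instance consists of surviving indices and yields the same right-hand side, the conclusion for $\mathbf{a}'$ lifts verbatim to $\mathbf{a}$. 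The main obstacle is therefore the two degenerate divisibility cases together with the interaction with the base case. When $a_i\mid f_2'$, so that $f_2$ is unchanged and the left-hand side drops by $a_i$, I expect to argue directly rather than by lifting: the relation $a_{i,l}\mid h$ for all $l\ne i,j$ produces couples $\{i,l\}$ with $a_{i,l}\mid h$, which are legitimate witnesses for alternative (1), and one chooses $l$ so that $\lcm(a_i,a_l)-a_i-a_l$ is small enough; the case $f_2'\mid a_i$ is similar and even more restrictive on the second block. Finally, when the reduction lands on $n=2$, Lemma~\ref{le} shows the conclusion can fail only in a very rigid configuration, and I would verify that in each such configuration the extra weight $a_i$ present in $\mathbf{a}$ restores one of the two alternatives. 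Controlling these boundary configurations is the delicate part; the generic lift is immediate from the displayed identities and Lemma~\ref{lcm a+b}.
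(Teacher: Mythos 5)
Your reduction identities are correct: with $a_i\mid a_j$ in the first block, every $l\neq i,j$ satisfies $a_{i,l}=a_{\{i,j,l\}}\mid h$, so indeed $f_1(\mathbf{a},s)=f_1(\mathbf{a}',s-1)$ and $f_2(\mathbf{a},s)=\lcm(f_2(\mathbf{a}',s-1),a_i)$, and the lift works whenever $a_i\nmid f_2(\mathbf{a}',s-1)$ and $f_2(\mathbf{a}',s-1)\nmid a_i$. But the case you defer, $a_i\mid f_2(\mathbf{a}',s-1)$, is a genuine gap rather than a boundary nuisance: there $f_2(\mathbf{a},s)=f_2(\mathbf{a}',s-1)$, so the left-hand side for $\mathbf{a}$ equals the left-hand side for $\mathbf{a}'$ \emph{minus} $a_i$, and the inductive hypothesis no longer implies either alternative. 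Your proposed repair --- find $l$ with $a_{i,l}\mid h$ and ``choose $l$ so that $\lcm(a_i,a_l)-a_i-a_l$ is small enough'' --- is not an argument: nothing in your setup guarantees such an $l$ exists, and proving $f_1+f_2-\sum_k a_k\geq \lcm(a_i,a_l)-a_i-a_l$ for some admissible pair is a statement of the same nature and difficulty as Proposition~\ref{main prop} itself. The same goes for the rigid $n=2$ configurations of Lemma~\ref{le} on which your induction bottoms out (the proposition starts at $n=3$): in the paper those configurations demand the detailed case analysis carried out inside the $n=3$ proof, so they cannot be absorbed into the reduction lemma for free. As written, your proof is incomplete precisely at its hard points.

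The paper's proof avoids this trap by never deleting the weight. After permuting the first block so that the violating pair sits at positions $(s-1,s)$ with $a_s\mid a_{s-1}$, it compares $(\mathbf{a},s)$ with $(\mathbf{a},s-1)$: the \emph{same} sequence, with index $s$ transferred to the other block. Then $\sum_i a_i$ is unchanged; and since $a_{s-1,s}=a_s\nmid h$ forces $\{s-1,s\}\in\Sigma_2(\mathbf{a},s)$, one gets $a_s\mid h_2(\mathbf{a},s)\mid f_2(\mathbf{a},s)$ automatically, so Lemma~\ref{lem s-1 s} yields $f_k(\mathbf{a},s)\geq f_k(\mathbf{a},s-1)$ for $k=1,2$ with no exceptional subcase: the conclusion for $(\mathbf{a},s-1)$ lifts unconditionally. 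Moreover $a_{s,j}=a_{s-1,s,j}\mid h$ for all $j>s$ while $a_s,a_j\nmid h$, so the transferred index creates no new divisibility pair in its new block; hence the count $d(\mathbf{a},s)$ of violating pairs strictly drops, and induction on $d$ (with $n$ fixed) finishes. If you want to salvage your write-up, replace deletion by this transfer: keeping $a_i$ in the instance, absorbed into $h_2$, is exactly what prevents the left-hand side from losing the summand $a_i$ in your problematic case.
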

\begin{proof}
Define $d ({\mathbf{a}}, s)$ to be the number of pairs $(i, j)$ such that 
\begin{itemize}
 \item $ 0\leq i<j\leq s$ or $s<i<j\leq n$;
 \item $a_i | a_j$ or $a_j| a_i$. 
\end{itemize}
Then we want to show that if Proposition~\ref{main prop} holds for all $({\mathbf{a}}, s)$ with $d ({\mathbf{a}}, s)=0$, then Proposition~\ref{main prop} holds.

For some $({\mathbf{a}}, s)$ with $d ({\mathbf{a}}, s)>0$, we may assume that 
$s\geq 1$ and $a_s|a_{s-1}$ without loss of generality.
Then we consider Proposition~\ref{main prop} for $s-1$. Namely, we consider $f_1({\mathbf{a}}, s-1)$ and $f_2({\mathbf{a}}, s-1)$.
Since $a_{s-1, s}=a_s$ does not divide $h$, we have $a_s|h_2({\mathbf{a}}, s)$. 
So by Lemma~\ref{lem s-1 s},
\[f_1({\mathbf{a}}, s)+f_2({\mathbf{a}}, s)\geq f_1({\mathbf{a}}, s-1)+f_2({\mathbf{a}}, s-1).\]
 Moreover, for any $s<j\leq n$, we have $a_s\nmid a_j$ and $a_j\nmid a_s$; 
 this is because $a_{s, j}=a_{s-1, s, j}$ divides $h$ but $a_j, a_s$ does not divide $h$ by assumption. Hence $d ({\mathbf{a}}, s-1)\leq d ({\mathbf{a}}, s)-1$. So we can prove Proposition~\ref{main prop} by induction on $d ({\mathbf{a}}, s)$.
\end{proof}

\begin{lem}
 Proposition~\ref{main prop} holds for $n=3$.
\end{lem}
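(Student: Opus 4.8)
The plan is to prove Proposition~\ref{main prop} for $n=3$ by reducing, via the lemmas already established, to the case $n=2$ and then handling the residual obstructions identified in Lemma~\ref{le}. First I would invoke Lemma~\ref{lem star} to assume condition $(\star)$, so that within each block $\{0,\dots,s\}$ and $\{s+1,\dots,n\}$ no $a_i$ divides another. By the symmetry of the statement under $s\mapsto n-1-s$ (here $n-1-s = 2-s$), I may restrict to $s\in\{-1,0,1\}$. The strategy is to relate the quantities $f_1(\mathbf{a},s),f_2(\mathbf{a},s)$ for $n=3$ to the corresponding quantities for a truncated sequence with $n=2$, using Lemma~\ref{lem s-1 s} to compare consecutive values of $s$ and thereby push the inequality $f_1+f_2-\sum a_i$ in the direction needed for conclusion (1) or (2).

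The key steps, in order, are as follows. I would first dispose of the cases where the conclusion follows immediately: by Lemma~\ref{le}, for $n=2$ the proposition can only fail under the three listed restrictive conditions, so I want to show these obstructions cannot survive when a fourth index $a_3$ (or the extra structure of $n=3$) is present. Concretely, for each admissible $s$ I would examine $\Sigma_1(\mathbf{a},s)$ and $\Sigma_2(\mathbf{a},s)$ and the divisibility pattern of the pairwise gcds $a_{i,j}$ against $h$. Because $a_I\mid h$ for all $|I|=3$ while no single $a_i\mid h$, the two-element sets in $\Sigma_1,\Sigma_2$ are tightly constrained: any $a_{i,j}\nmid h$ forces $a_i,a_j$ to share a common factor not dividing $h$, and Lemma~\ref{gcd lemma} controls how these interact across the triple condition. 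I would then compute $f_1,f_2$ explicitly in each configuration, using Lemma~\ref{lcm+gcd} and the second identity lemma $\gcd(\lcm_i a_i,\lcm_j b_j)=\lcm_{i,j}\gcd(a_i,b_j)$ to extract the $\lcm$ terms needed on the right-hand side of (1) or (2).

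The main obstacle I expect is the case $s=0$ (equivalently $s=1$ by symmetry), which is genuinely new at $n=3$ and not covered by the $n=2$ analysis: here $\{0\}$ is isolated on one side and $\{1,2,3\}$ on the other, so $\Sigma_1$ can be nonempty with several two-element subsets of $\{1,2,3\}$, and $f_2(\mathbf{a},s)=\lcm_{1\leq j\leq 3}(a_j,h_1)$ couples three weights with the $\lcm$ of their bad gcds. In this regime I would need to track which of $a_{1,2},a_{1,3},a_{2,3}$ fail to divide $h$ and whether the triple gcd $a_{1,2,3}$ (which divides $h$) forces enough cancellation; the delicate point is that the inductive comparison in Lemma~\ref{lem s-1 s} only gives $f_2(\mathbf{a},s)\geq f_2(\mathbf{a},s-1)$ under the extra hypothesis $a_s\mid f_2(\mathbf{a},s)$, which may fail, so I cannot simply reduce $s=0$ to $s=-1$. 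To handle this I would argue directly, splitting on $|\Sigma_1(\mathbf{a},0)|\in\{0,1,2,3\}$ in parallel with the subcases of Lemma~\ref{le}, and using Lemma~\ref{lcm a+b} (on a pair with mutual non-divisibility guaranteed by $(\star)$) to produce the extra additive slack $\lcm(a_u,a_v)-a_u-a_v$ required by conclusion (1), or chaining two such steps for conclusion (2). Once $s=0$ is settled, the remaining values of $s$ follow either from Lemma~\ref{le} applied after truncation or from the symmetry reduction, completing the case $n=3$.
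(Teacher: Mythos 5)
Your skeleton---reduce by Lemma~\ref{lem star} to condition $(\star)$, exploit the symmetry $s\mapsto n-1-s$, split on $s$, and lean on the $n=2$ analysis of Lemma~\ref{le}---is the same as the paper's, but the plan has two genuine gaps. First, a definitional error in the case you single out as central, $s=0$: by the definitions in Proposition~\ref{main prop}, the bad gcds from the block $\{1,2,3\}$ enter $f_1$, not $f_2$; that is, $f_1(\mathbf{a},0)=\lcm(a_0,h_1(\mathbf{a},0))$ and $f_2(\mathbf{a},0)=\lcm(a_1,a_2,a_3)$ (since $\Sigma_2(\mathbf{a},0)=\emptyset$ forces $h_2(\mathbf{a},0)=1$), whereas you wrote $f_2=\lcm_{1\leq j\leq 3}(a_j,h_1)$. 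This cross-coupling is not a notational quibble: in the paper's mirror case $s=2$, it is exactly the term $f_2=\lcm(a_3,a_{0,1})$---a weight from one block coupled with a bad gcd from the other---that produces the summand $\lcm(a_{0,1},a_3)$ needed for conclusion (2) in the exceptional configuration of Lemma~\ref{le} (one bad pair $\{0,1\}$ with $a_0\mid\lcm(a_1,a_2)$ and $a_1\mid\lcm(a_0,a_2)$). Under your reading the sum $f_1+f_2$ degenerates to $a_0+\lcm(a_1,a_2,a_3)$, which only yields $\lcm(a_2,a_3)-a_2-a_3$ in that configuration; conclusion (1) is then unavailable because $a_{2,3}\nmid h$, and the case cannot be closed. (Also, $s=0$ is symmetric to $s=2$, not to $s=1$ as your parenthetical asserts.)

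Second, and more seriously, your closing claim that once $s=0$ is settled ``the remaining values of $s$ follow either from Lemma~\ref{le} applied after truncation or from the symmetry reduction'' is false, and it conceals the hardest part of the proof. Within your chosen range $\{-1,0,1\}$ the symmetry gives nothing new (it maps $-1,0,1$ to $3,2,1$), so both $s=1$ and $s=-1$ need independent arguments. For $s=1$ (two weights on each side) the paper must establish the new inequality $\lcm(a_0,a_1,a_{2,3})+\lcm(a_2,a_3,a_{0,1})\geq\lcm(a_0,a_1)+\lcm(a_{0,1},a_2)+a_3$ when $a_{0,1}\nmid h$ and $a_{2,3}\nmid h$, via a case split on whether $a_3\mid\lcm(a_{0,1},a_2)$. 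Worse, $s=-1$ (all four weights in a single block, the mirror of the paper's Case 3, $s=3$) is the crux of the entire lemma: when every $a_i$ divides $\lcm_{j\neq i}(a_j)$, truncating to any three weights lands you precisely in the exceptional case of Lemma~\ref{le}, so the $n=2$ result cannot finish the job; the paper has to introduce Claim~\ref{cl1}---constructing pairwise coprime integers $a,b,c\geq 2$ out of quotients of the $a_{i,j}$ and invoking Lemma~\ref{1-a-b-c}---and then run a three-way split on $\max_i|I_i|$. None of this is recoverable from truncation plus bookkeeping with Lemma~\ref{lcm a+b}. You have also misplaced the difficulty: the case you flag as the main obstacle ($s=0$, the paper's $s=2$) is in fact one of the quicker reductions, while the case you dismiss in one sentence carries the essential new content of $n=3$.
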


\begin{proof}
To simplify the notation, we omit $({\mathbf{a}}, s)$ from all symbols, for example, $f_1({\mathbf{a}}, s)$ is simplified by $f_1$. By Lemma~\ref{lem star}, we always assume that condition (\hyperref[red2]{$\star$}) holds.
 By symmetry, we may also assume that $1\leq s\leq 3$.
We split the discussion into $3$ cases.


\medskip

 \textbf{Case 1}: $s=1$. 
 
 By condition (\hyperref[red2]{$\star$}) and Lemma~\ref{lcm a+b}, 
 we have 
 \begin{align*}
 f_1\geq \lcm(a_0,a_1)\geq a_0+a_1 \quad \text{and} \quad f_2\geq \lcm(a_2,a_3)\geq a_2+a_3.
 \end{align*}
 Thus, 
 \[
 f_1+f_2-\sum_{i=0}^{n}a_i \geq \begin{cases}
 \lcm(a_0,a_1)-a_0-a_1, \\
 \lcm(a_2,a_3)-a_2-a_3. 
 \end{cases}
 \]
 So if $a_{0,1}|h$ or $a_{2,3}|h$, then we conclude the proposition.

Now suppose that $a_{0,1}\nmid h$ and $a_{2,3}\nmid h$. In this case, $f_1=\lcm(a_0,a_1,a_{2,3})$ and $f_2=\lcm(a_2,a_3,a_{0,1})$. We claim that the following inequality holds: 
 \begin{align}\label{eq1} \lcm(a_0,a_1,a_{2,3})+\lcm(a_2,a_3,a_{0,1})\geq \lcm(a_0,a_1)+\lcm(a_{0,1},a_2)+a_3.
 \end{align}

 Recall that $\lcm(a_{0,1},a_2)\nmid a_3$ as $a_2\nmid a_3$ by condition (\hyperref[red2]{$\star$}).
 
 If $a_3\nmid\lcm(a_{0,1},a_2)$, then $\lcm(a_2,a_3,a_{0,1})\geq \lcm(a_{0,1},a_2)+a_3$ by Lemma~\ref{lcm a+b}. So \eqref{eq1} holds. 

 If $a_3|\lcm(a_{0,1},a_2)$, then $a_3=\lcm(a_{2,3},a_{0,1,3})|\lcm(a_0,a_1,a_{2,3})$. 
 In particular, $\lcm(a_0,a_1,a_{2,3})=\lcm(a_0,a_1,a_{3})$.
Then  $\lcm(a_0,a_1)\nmid a_{3}$ and $a_3\nmid \lcm(a_0,a_1)$.
In fact, if $\lcm(a_0,a_1)|a_{3}$, then 
 $a_{0,1}=a_{0,1,3}|h$, a contradiction; if $a_3|\lcm(a_0,a_1)$, then $a_{2,3}|\lcm(a_{0,2,3},a_{1,2,3})$ which divides $h$, a contradiction. 
 So by Lemma~\ref{lcm a+b},
 \begin{align*}
 \lcm(a_0,a_1,a_{2,3})=\lcm(a_0,a_1,a_{3})\geq 
 \lcm(a_0,a_1)+a_3.
 \end{align*}
 So \eqref{eq1} holds. 

\medskip

 \textbf{Case 2}: $s=2$. 

 In this case $f_1=\lcm(a_0,a_1,a_2)$.
 Clearly $f_2\geq a_3$ and 
 \[
 f_1+f_2-\sum_{i=0}^{3}a_i\geq \lcm(a_0,a_1,a_2)+a_3-\sum_{i=0}^{3}a_i=\lcm(a_0,a_1,a_2)-\sum_{i=0}^{2}a_i.
 \] 
 Then by Lemma~\ref{le}, without loss of generality, we only need to consider the case 
 that $\Sigma_2=\{\{0,1\}\}$, $a_0|\lcm(a_1, a_2)$, and $a_1|\lcm(a_0, a_2)$. 
 We know that $\lcm(a_0,a_1)\nmid a_2$ and $a_2\nmid \lcm(a_0,a_1)$. In fact, $\lcm(a_0,a_1)\nmid a_2$ as $a_0\nmid a_2$ by condition (\hyperref[red2]{$\star$}); if $a_2|\lcm(a_0,a_1)$, then $a_2=\lcm(a_{0,2},a_{1,2})$ which divides $h$, a contradiction. So by Lemma~\ref{lcm a+b},
 \begin{align*}
 \lcm(a_0,a_1,a_2)\geq \lcm(a_0,a_1)+a_2.
 \end{align*}
 Thus,
 \begin{align*}
 f_1+f_2-\sum_{i=0}^{3}a_i & =\lcm(a_0,a_1,a_2)+\lcm(a_3,a_{0,1})-\sum_{i=0}^{3}a_i \\
 &\geq \lcm(a_0,a_1)+a_2+\lcm(a_3,a_{0,1})-\sum_{i=0}^{3}a_i \\
 &=\lcm(a_0,a_1)+\lcm(a_3,a_{0,1})-a_0-a_1-a_3.
 \end{align*}

 \medskip
 
 \textbf{Case 3}: $s=3$. 

In this case $f_2=h_2$ and we split the discussion into 2 subcases.

 \medskip
 
 \textbf{Subcase 3-1}: There exists $i\in \{0,1,2,3\}$ such that $a_i\nmid \lcm_{{0\leq j\leq 3;j\neq i}}(a_j)$.
 
 Without loss of generality, we may assume that $a_3\nmid \lcm_{0\leq j\leq 2}(a_j)$.
As $\lcm_{0\leq j\leq 2}(a_j)\nmid a_3$
 by condition (\hyperref[red2]{$\star$}), we have
 \[
\lcm(a_0,a_1,a_2,a_3) \geq \lcm(a_0,a_1,a_2)+a_3
 \]
 by Lemma~\ref{lcm a+b}.
So by Lemma~\ref{le}, without loss of generality, we only need to consider the case that
$a_{0,1}\nmid h$, $a_{0,2}| h$, $a_{1,2}| h$, 
$a_0|\lcm(a_1, a_2)$, and $a_1|\lcm(a_0, a_2)$.
We want to show that 
 \begin{equation}\label{eq2}
 \lcm(a_0,a_1,a_2,a_3)+f_2\geq \lcm(a_0,a_1)+\lcm(a_{0,1},a_3)+a_2,
 \end{equation}
which concludes the proposition.

Note that $a_2\nmid \lcm(a_0,a_1)$ since $\lcm(a_{0,2},a_{1,2})|h$, and $ \lcm(a_0,a_1)\nmid a_2$ by condition (\hyperref[red2]{$\star$}). Also note that 
$\lcm(a_0,a_1,a_2,a_3)=\lcm(a_{0,1},a_2,a_3)$ as $a_0=\lcm(a_{0,1}, a_{0,2})$ and $a_1=\lcm(a_{0,1}, a_{1,2})$.

 
If $a_2\nmid \lcm(a_{0,1},a_3)$, then
 \begin{align*}
\lcm(a_0,a_1,a_2,a_3)= \lcm(a_{0,1},a_2,a_3) \geq 2\lcm(a_{0,1},a_3).
 \end{align*}
On the other hand, since $a_3\nmid \lcm(a_0,a_1,a_2)$, we have 
 \begin{align*}
 \lcm(a_0,a_1,a_2,a_3)\geq 2\lcm(a_0,a_1,a_2)\geq 2\lcm(a_0,a_1)+2a_2
 \end{align*}
 by Lemma~\ref{lcm a+b}.
 Thus \eqref{eq2} holds by summing up these two inequalities.
 
 If $a_2|\lcm(a_{0,1},a_3)$, then $a_2=\lcm(a_{0,1,2},a_{2,3})$. 
 Since $a_2\nmid h$ and $a_{0,1,2}|h$, we have $a_{2,3}\nmid h$, and in particular, $a_{2,3}|h_2$. Recall that we also have $a_{0,1}|h_2$. Therefore,  
 \begin{align*}
 \lcm(a_0,a_1,a_2,a_3)+f_2 &\geq \lcm(a_{0,1},a_3)+\lcm(a_{0,1},a_{2,3})\\
 &= \lcm(a_{0,1},a_3)+\lcm(a_{0,1},a_2) \\
 &= \lcm(a_{0,1},a_3)+\lcm(a_0,a_1,a_2) \\
 &\geq \lcm(a_{0,1},a_3)+\lcm(a_0,a_1)+a_2.
 \end{align*}
 Here the equalities are by 
$a_2=\lcm(a_{0,1,2},a_{2,3})$, $a_0=\lcm(a_{0,1}, a_{0,2})$, and $a_1=\lcm(a_{0,1}, a_{1,2})$; the last inequality is by Lemma~\ref{lcm a+b}.

\medskip

\textbf{Subcase 3-2}: $a_i| \lcm_{0\leq j\leq 3; j\neq i}(a_j)$ for all $i\in \{0,1,2,3\}$.

In this case, 
\begin{align}
a_i=\lcm(a_{i,j},a_{i,k},a_{i,l}) \text{ for any } \{i,j,k,l\}=\{0,1,2,3\}. \label{ai=lcm}
\end{align} 
Before the discussion, we have the following claim.
 
 \begin{claim}\label{cl1}
 Keep the assumption in this subcase. 
\begin{enumerate}
 \item If
$a_{1,3}\nmid\lcm(a_0,a_2)$, $a_{0,1}\nmid \lcm(a_2,a_3)$, and $a_{2,3}\nmid \lcm(a_0,a_1)$, then 
 \[
 \lcm(a_0,a_1,a_2,a_3)+\lcm(a_{0,1},a_{2,3})-\sum_{i=0}^{3}a_i\geq \lcm(a_0,a_2)-a_0-a_2.
 \]
 \item If $a_{0, 2}|h$, $a_{1,3}\nmid \lcm(a_{0,1},a_{2,3})$, $a_{0,1}\nmid h$, and $a_{2,3}\nmid h$, then the conclusion of the proposition holds.
\end{enumerate} 
 \end{claim} 
 \begin{proof}
(1) 
Note that
\[\lcm(a_0,a_1,a_2,a_3)=\lcm(a_{0,2},a_1,a_3)=\lcm(a_0,a_2,a_{1,3})\geq 2\lcm(a_0,a_2)\] where the equalities can be checked by \eqref{ai=lcm} and the inequality is by $a_{1,3}\nmid\lcm(a_0,a_2)$. 

If $\lcm(a_0,a_2)\nmid \lcm(a_1,a_3)$, then  
 \begin{align*}
 \lcm(a_0,a_1,a_2,a_3)\geq & \lcm(a_0,a_2)+\lcm(a_1,a_3) \\
 \geq & \lcm(a_0,a_2)+a_1+a_3
 \end{align*}
 by applying Lemma~\ref{lcm a+b} twice. Then we get the desired inequality.

Now suppose that $\lcm(a_0,a_2)|\lcm(a_1,a_3)$. Take 
\begin{align*}
 a{}&:=\frac{a_{0,1}}{\lcm(a_{0,1,2}, a_{0,1,3})}=\frac{a_{0,1}}{\gcd(a_{0,1}, \lcm(a_{2}, a_{3}))},\\
 b{}&:=\frac{a_{1,3}}{\lcm(a_{0,1,3}, a_{1,2,3})}=\frac{a_{1,3}}{\gcd(a_{1,3}, \lcm(a_{0}, a_{2}))},\\
 c{}&:=\frac{a_{2,3}}{\lcm(a_{0,2,3}, a_{1,2,3})}=\frac{a_{2,3}}{\gcd(a_{2,3}, \lcm(a_{0}, a_{1}))}.
\end{align*}
Then $a,b,c$ are pairwisely coprime, and $a,b,c\geq 2$ by the assumption. 

Then we can check that $b\lcm(a_0,a_2)$,
$a{a_3}$,  and $c{a_1}$ all divides $\lcm(a_1,a_3)$.
Indeed, from the definition, we have 
$b\lcm(a_0,a_2)=\lcm(a_{1,3},a_0,a_2)$ which divides $\lcm(a_1,a_3)$ as $\lcm(a_0,a_2)|\lcm(a_1,a_3)$; we also have $a{a_3}$ divides $\frac{a_{0,1}a_3}{a_{0,1,3}}=\lcm(a_{0,1}, a_3)$ and $c{a_1}$
divides $\frac{a_{2,3}a_1}{a_{1,2,3}}=\lcm(a_{2,3}, a_1)$.


If 
\begin{align}
\lcm(a_1,a_3)=b\lcm(a_0,a_2)=ca_1=aa_3, \label{a=b=c}
\end{align}
then $\lcm(a_{0,1}, a_{2,3})=\lcm(a_0,a_2)$. In fact, 
\begin{align*}
 \frac{ca_1}{b}{}&=\frac{a_{2,3}}{\lcm(a_{0,2,3}, a_{1,2,3})}\cdot \frac{\lcm(a_{0,1}, a_{1,2}, a_{1,3})\lcm(a_{0,1,3}, a_{1,2,3})}{a_{1,3}}\\
 {}&=\frac{a_{2,3}}{\lcm(a_{0,2,3}, a_{1,2,3})}\cdot \frac{\lcm(a_{0,1}, a_{1,2}, a_{1,3})\gcd(\lcm(a_{0,1}, a_{1,2}), a_{1,3})}{a_{1,3}}\\
 {}&=\frac{a_{2,3}}{\lcm(a_{0,2,3}, a_{1,2,3})}\cdot \lcm(a_{0,1}, a_{1,2})\\
 {}&=\frac{a_{2,3} a_{0,1}a_{1,2}}{\lcm(a_{0,2,3}, a_{1,2,3})a_{0,1,2}},
\end{align*}
which divides $\frac{a_{2,3}a_{0,1}a_{1,2}}{ a_{0,1,2, 3}^2}$, and by a similar calculation, $\frac{aa_3}{b}$ divides $\frac{a_{0,1}a_{2,3}a_{0,3}}{a_{0,1,2, 3}^2}$.
Hence $\lcm(a_0,a_2)=\frac{ca_1}{b}=\frac{aa_3}{b}$ divides
\[
\gcd(\frac{a_{2,3}a_{0,1}a_{1,2}}{ a_{0,1,2, 3}^2}, \frac{a_{0,1}a_{2,3}a_{0,3}}{a_{0,1,2, 3}^2})=\frac{a_{2,3}a_{0,1}}{ a_{0,1,2, 3}}=\lcm(a_{0,1}, a_{2,3}).
\]
 Hence 
 \begin{align*}
 & \lcm(a_1,a_3)+\lcm(a_{0,1}, a_{2,3})-\lcm(a_0,a_2)-a_1-a_3\\
 = &\lcm(a_1,a_3)-a_1-a_3\geq 0
 \end{align*}
by  Lemma~\ref{lcm a+b}, which concludes (1).

If \eqref{a=b=c} does not hold, then at least one of $\frac{\lcm(a_1,a_3)}{b\lcm(a_0,a_2)}$, $\frac{\lcm(a_1,a_3)}{ca_1}$, $\frac{\lcm(a_1,a_3)}{aa_3}$ is at least $2$, so
\[
 \lcm(a_1,a_3) -\lcm(a_0,a_2)-a_1-a_3\geq 0
 \]
by Lemma~\ref{1-a-b-c}, which concludes (1).

(2) We can check the conditions of (1): if $a_{1,3}|\lcm(a_0,a_2)$, then $a_{1,3}=\lcm(a_{0,1,3},a_{1,2,3})$ which divides $\lcm(a_{0,1},a_{2,3})$, a contradiction; if $a_{0,1}| \lcm(a_2,a_3)$, then $a_{0,1}= \lcm(a_{0,1,2},a_{0,1, 3})$ which divides $h$, a contradiction; similarly, $a_{2,3}\nmid \lcm(a_0,a_1)$.
So the proposition holds by the inequality in (1) as $a_{0, 2}|h$. Here note that $f_2\geq \lcm(a_{0,1}, a_{2,3})$.
 \end{proof}

Denote $I_i=\{j\mid a_{i,j}\nmid h,0\leq j \leq 3, j\neq i\}$.
Since $a_i\nmid h$, $|I_i|\geq 1$ by \eqref{ai=lcm}.
We split the discussion into $3$ subsubcases.

 \medskip

 \textbf{Subsubcase 3-2-1}: $|I_i|=1$ for all $i\in \{0,1,2,3\}$. 
 
 Without loss of generality, we may assume that $\Sigma_2=\{\{0,1\},\{2,3\}\}$, so $f_2=h_2=\lcm(a_{0,1},a_{2,3})$ and $\lcm(a_{0,2},a_{0,3},a_{1,2},a_{1,3})|h$. 

 If $\lcm(a_{0,2},a_{0,3},a_{1,2},a_{1,3})$ divides $\lcm(a_{0,1},a_{2,3})$, 
 then $\lcm(a_0,a_1,a_2,a_3)=\lcm(a_{0,1},a_{2,3})$ by \eqref{ai=lcm}. Hence
 \begin{align*}
 f_1+f_2-\sum_{i=0}^{3}a_i &=2\lcm(a_{0,1},a_{2,3})-\sum_{i=0}^{3}a_i \\
 &=\lcm(a_0,a_2)+\lcm(a_1,a_3)-\sum_{i=0}^{3}a_i\\
 &\geq \lcm(a_0,a_2)+a_1+a_3-\sum_{i=0}^{3}a_i\\
 &=\lcm(a_0,a_2)-a_0-a_2.
 \end{align*}

 If $\lcm(a_{0,2},a_{0,3},a_{1,2},a_{1,3})$ does not divide $\lcm(a_{0,1},a_{2,3})$, then we may assume that $a_{1,3}\nmid \lcm(a_{0,1},a_{2,3})$. Then we conclude the proposition by Claim~\ref{cl1}(2).

 
 \medskip

 \textbf{Subsubcase 3-2-2}: $\max_{0\leq i\leq 3}\{|I_i|\}=2$. 
 
 We may assume that $I_0=\{1,3\}$. Then $a_{0,2}|h$. Since $|I_2|\geq 1$, we may assume that $a_{2,3}\nmid h$. So 
 $I_3=\{0,2\}$ and in particular, $a_{1,3}|h$. 
 
 If $a_{1,3}\nmid \lcm(a_0,a_2)$, then we conclude the proposition by Claim~\ref{cl1}(2).
 If $a_{1,3}|\lcm(a_0,a_2)$, then $a_{1,3}=\lcm(a_{0,1,3}, a_{1,2,3})$ which divides $\lcm(a_{0,1},a_{2,3})|f_2$. So $a_3|f_2$ as $a_3=\lcm(a_{0,3},a_{1,3},a_{2,3})$. Hence $f_1\geq f_1(\mathbf{a}, 2)$ and $f_2\geq f_2(\mathbf{a}, 2)$ by Lemma~\ref{lem s-1 s}. So we conclude the proposition by Case 2. 
 
 \medskip

 \textbf{Subsubcase 3-2-3}: $\max_{0\leq i\leq 3}\{|I_i|\}=3$. 

 We may assume that $|I_3|=3$, then $a_3|f_2$. Hence $f_1\geq f_1(\mathbf{a}, 2)$ and $f_2=f_2(\mathbf{a}, 2)$ by Lemma~\ref{lem s-1 s}. So we conclude the proposition by Case 2.

In conclusion, Proposition~\ref{main prop} holds for $n=3$. 
 \end{proof}

\begin{proof}[Proof of Proposition~\ref{main prop}]
We will show Proposition~\ref{main prop} by induction on $n$. Fix $n\geq 4$. Suppose that the proposition holds for $3, \dots, n-1$. 
By Lemma~\ref{lem star}, we always assume that condition (\hyperref[red2]{$\star$}) holds. We split the discussion into $2$ cases.

\medskip

 \textbf{Case 1:} There exists $0\leq i_0\leq n$ such that $a_{i_0}\nmid \lcm_{j\neq i_0}(a_j)$. 
 
 Without loss of generality, we may assume that $i_0=n$. Denote $\mathbf{b}=(a_0, \dots, a_{n-1})$.

 First we consider the case $s=n-1$.
Since $h_2({\mathbf{a}}, s)|\lcm_{j\leq n-1}(a_j)$, we have $a_n\nmid h_2({\mathbf{a}}, s)$. If $h_2({\mathbf{a}}, s)\nmid a_{n}$, then $\lcm(a_n,h_2({\mathbf{a}}, s))\geq a_n+h_2({\mathbf{a}}, s)$ by Lemma~\ref{lcm a+b}. Hence 
 \begin{align*}
 f_1({\mathbf{a}}, s)+f_2({\mathbf{a}}, s)-\sum_{i=0}^{n}a_i &\geq \lcm_{0\leq j\leq n-1}(a_j)+h_2({\mathbf{a}}, s)-\sum_{i=0}^{n-1}a_i.
 \end{align*}
 Then we conclude the proposition by the inductive hypothesis for $({\mathbf{b}}, s)$. If $h_2({\mathbf{a}}, s)|a_n$, then $a_{i,j}|h$ for all $0\leq i<j\leq n-1$. In fact, if $a_{i,j}\nmid h$ for some $0\leq i<j\leq n-1$, then $\{i,j\}\in \Sigma_2$, which implies that $a_{i,j}| h_2({\mathbf{a}}, s) |a_n$, then $a_{i,j}=a_{i,j,n}$ which divides $h$, a contradiction.
 So $h_2({\mathbf{a}}, s)=1$, $f_2({\mathbf{a}}, s)=a_n$, and 
 \begin{align*}
 f_1({\mathbf{a}}, s)+f_2({\mathbf{a}}, s)-\sum_{i=0}^{n}a_i &= \lcm_{0\leq j\leq n-1}(a_j)-\sum_{i=0}^{n-1}a_i \\
 &\geq \lcm(a_u,a_v)-a_u-a_v
 \end{align*}
 for any $0\leq u< v\leq n-1$ by \cite[Lemma 6.1]{PST17}.

Next we consider the case $s=n$. Then 
\[f_1({\mathbf{a}}, s)=\lcm_{0\leq j\leq n}(a_j)\geq \lcm_{0\leq j\leq n-1}(a_j)+a_n=f_1({\mathbf{b}}, s-1)+a_n\]
by condition (\hyperref[red2]{$\star$}) and Lemma~\ref{lcm a+b}.
On the other hand, it is clear that $f_2({\mathbf{a}}, s)\geq f_2({\mathbf{b}}, s-1)$. So
 we conclude the proposition by the inductive hypothesis for $({\mathbf{b}}, s-1)$.

Finally we consider the case $s\leq n-1$. Then $\lcm_{s< j\leq n-1}(a_j, h_2({\mathbf{a}}, s))\nmid a_n$ and $a_n\nmid \lcm_{s< j\leq n-1}(a_j, h_2({\mathbf{a}}, s))$. In fact, the former is by condition (\hyperref[red2]{$\star$}) as $a_{n-1}\nmid a_n$, and the latter is by $\lcm_{s< j\leq n-1}(a_j, h_2({\mathbf{a}}, s))| \lcm_{0\leq j\leq n-1}(a_j)$.
Hence 
\[f_2({\mathbf{a}}, s)\geq \lcm_{s< j\leq n-1}(a_j, h_2({\mathbf{a}}, s))+a_n=f_2({\mathbf{b}}, s)+a_n\]
by Lemma~\ref{lcm a+b}.
On the other hand, it is clear that $f_1({\mathbf{a}}, s)\geq f_1({\mathbf{b}}, s)$. So
 we conclude the proposition by the inductive hypothesis for $({\mathbf{b}}, s)$.

\medskip

 \textbf{Case 2:} For all $0\leq i\leq n$, we have $a_i|\lcm_{j\neq i}(a_j)$.
 
 In this case, $a_i=\lcm_{j\neq i}(a_{i,j})$ for $0\leq i\leq n$. Note that by symmetry, we only need to consider the case that $s\geq \frac{n-1}{2}$ (but we do not make this assumption at this moment). We split the discussion into $2$ subcases. 
 
 
 \medskip

 \textbf{Subcase 2-1}: $s\geq 1$, and $a_{i,j}|h$ for all $0\leq i<j\leq s$ (equivalently, $\Sigma_2({\mathbf{a}}, s)=\emptyset$). 
 
 In this case, $a_0\nmid \lcm_{1\leq j\leq s}(a_j,h_1({\mathbf{a}}, s))$. 
In fact, if $a_0| \lcm_{1\leq j\leq s}(a_j, h_1({\mathbf{a}}, s))$, then $a_0=\lcm_{1\leq j\leq s}(a_{0, j},\gcd(a_0, h_1({\mathbf{a}}, s)))$, but both $a_{0, j}$ and $\gcd(a_0, h_1({\mathbf{a}}, s))$ divides $h$, a contradiction. On the other hand, $ \lcm_{1\leq j\leq s}(a_j,h_1({\mathbf{a}}, s))\nmid a_0$ as $a_1\nmid a_0$ by condition (\hyperref[red2]{$\star$}).
Take $\mathbf{b}'=(a_1,\dots, a_n)$,
then 
\begin{align*}
 f_1({\mathbf{a}}, s){}&=\lcm_{0\leq j\leq s}(a_j,h_1({\mathbf{a}}, s))\\
 {}& \geq \lcm_{1\leq j\leq s}(a_j,h_1({\mathbf{a}}, s))+a_0=f_1({\mathbf{b}'}, s-1)+a_0
\end{align*} 
by condition (\hyperref[red2]{$\star$}) and Lemma~\ref{lcm a+b}.
On the other hand, it is clear that $f_2({\mathbf{a}}, s)\geq f_2({\mathbf{b}'}, s-1)$. So
 we conclude the proposition by the inductive hypothesis for $({\mathbf{b}'}, s-1)$.

 
 \medskip

 \textbf{Subcase 2-2}: $s\geq \frac{n-1}{2}$, and $a_{i',j'}\nmid h$ for some $0\leq i'<j'\leq s$  (equivalently, $\Sigma_2({\mathbf{a}}, s)\neq \emptyset$).
 
We split the discussion into $2$ subsubcases.
 \medskip

 \textbf{Subsubcase 2-2-1}: $\lcm(a_i,a_j)=f_1({\mathbf{a}}, s)$ for all $0\leq i<j\leq s$ with $a_{i,j}\nmid h$.

 Without loss of generality, we may assume that $a_{s-1,s}\nmid h$. Since $\lcm_{0\leq i\leq s}(a_i)$ divides 
 $f_1({\mathbf{a}}, s)=\lcm(a_{s-1},a_s)$, we have $a_i=\lcm(a_{i,s-1},a_{i,s})$ for all $0\leq i\leq s-2$. Since $a_i\nmid h$, we have either $a_{i,s-1}\nmid h$ or $a_{i,s}\nmid h$. Without loss of generality, we may assume that $a_{0, s}\nmid h$. 
 
 Then we have the following claim:
 \begin{claim}\label{claim 2-2-1}
 \begin{enumerate}
 \item $a_{i, s}\nmid h$ for all $0\leq i\leq s-1$.
 
\item $a_s\mid f_2({\mathbf{a}}, s)$.
 \item $a_{i, j}| h$ for all $s< i<j\leq n$.
 \item If $s\geq 3$, then $a_{i, j}\mid h$ for all $0\leq i<j\leq s-1$.

 \end{enumerate}
 \end{claim}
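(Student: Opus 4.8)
As in the earlier parts of this proof we abbreviate $f_k=f_k(\mathbf a,s)$, $h_k=h_k(\mathbf a,s)$, $\Sigma_k=\Sigma_k(\mathbf a,s)$. The four parts are proved in the order (1), (2), (3), (4), since (2) and (4) will rely on (1). The key algebraic device throughout is the distributivity of $\gcd$ over $\lcm$ for positive integers, namely $\gcd(x,\lcm(y,z))=\lcm(\gcd(x,y),\gcd(x,z))$ and dually $\lcm(x,\gcd(y,z))=\gcd(\lcm(x,y),\lcm(x,z))$, used together with the standing facts that every $a_I$ with $|I|=3$ divides $h$ and that $\lcm(a_i,a_j)=f_1$ whenever $0\le i<j\le s$ and $a_{i,j}\nmid h$ (the assumption of Subsubcase 2-2-1). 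Recall also that here $f_1=\lcm_{0\le j\le s}(a_j)=\lcm(a_{s-1},a_s)$, so $a_i\mid f_1$ for every $i\le s$, and that for each $i\le s-2$ at least one of $a_{i,s-1},a_{i,s}$ fails to divide $h$ because $a_i=\lcm(a_{i,s-1},a_{i,s})\nmid h$.

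For (1) I argue by contradiction; the indices $i\in\{0,s-1\}$ are settled by the standing assumptions $a_{0,s}\nmid h$ and $a_{s-1,s}\nmid h$, so suppose $a_{i,s}\mid h$ for some $1\le i\le s-2$. Then $a_{i,s-1}\nmid h$, whence $\lcm(a_i,a_{s-1})=f_1$ by Subsubcase 2-2-1. Distributing $\lcm(a_{s-1},-)$ over the $\gcd$ defining $a_{i,s}$ gives $\lcm(a_{s-1},a_{i,s})=\gcd(\lcm(a_{s-1},a_i),\lcm(a_{s-1},a_s))=f_1$, and since $a_{i,s}\mid h$ this yields $f_1\mid\lcm(a_{s-1},h)$. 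Hence $a_{0,s}\mid a_0\mid f_1\mid\lcm(a_{s-1},h)$, and distributing once more, $a_{0,s}=\gcd(a_{0,s},\lcm(a_{s-1},h))=\lcm(a_{0,s-1,s},\gcd(a_{0,s},h))$; both factors divide $h$ (the first is a genuine triple), so $a_{0,s}\mid h$, contradicting $a_{0,s}\nmid h$. I expect this to be the main obstacle: unlike the other parts, here one must \emph{produce} a non-divisibility, and it is precisely the identity trick that converts the two $\lcm$-equalities into the forbidden relation $a_{0,s}\mid h$.

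Part (2) is then immediate. By Case~2 we have $a_s=\lcm_{k\ne s}(a_{s,k})$, and each factor divides $f_2$: for $k\le s-1$ part (1) gives $a_{k,s}\nmid h$, so $\{k,s\}\in\Sigma_2$ and $a_{s,k}\mid h_2\mid f_2$, while for $k>s$ one has $a_{s,k}\mid a_k\mid f_2$. For (3), suppose $a_{i,j}\nmid h$ with $s<i<j\le n$; then $\{i,j\}\in\Sigma_1$, so $a_{i,j}\mid h_1\mid f_1=\lcm(a_{s-1},a_s)$, and distributivity gives $a_{i,j}=\lcm(a_{i,j,s-1},a_{i,j,s})$. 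Since $i,j>s$, the indices $s-1,s$ are distinct from $i,j$, so both terms are genuine triples dividing $h$; hence $a_{i,j}\mid h$, a contradiction.

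Finally, for (4) assume $s\ge 3$ and suppose $a_{i,j}\nmid h$ for some $0\le i<j\le s-1$; then $\lcm(a_i,a_j)=f_1$ and $a_{i,j}\mid a_i\mid f_1=\lcm(a_{s-1},a_s)$. If $j\le s-2$ the two-triple argument of (3) applies verbatim, writing $a_{i,j}=\lcm(a_{i,j,s-1},a_{i,j,s})$ with both terms genuine triples. The boundary case $j=s-1$ is exactly where $s\ge 3$ is needed, since then the triple $\{i,j,s-1\}$ degenerates; here I instead invoke part (1), which gives $a_{k,s}\nmid h$ and hence $\lcm(a_k,a_s)=f_1$ for every $k\le s-1$. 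Choosing $k\le s-2$ with $k\ne i$, possible because $|\{0,\dots,s-2\}|\ge 2$ when $s\ge 3$, I rewrite $f_1=\lcm(a_k,a_s)$ and distribute to obtain $a_{i,s-1}=\lcm(a_{i,s-1,k},a_{i,s-1,s})$, now with both terms genuine triples, so $a_{i,s-1}\mid h$, a contradiction. Thus once part (1) is in hand, parts (2)--(4) follow from repeated use of the same distributivity-plus-triples mechanism, the only real care being to avoid degenerate two-index ``triples'', which is what forces the third-index maneuver in the $j=s-1$ case.
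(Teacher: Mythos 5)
Your proof is correct and relies on essentially the same mechanism as the paper's: gcd/lcm distributivity, the Subsubcase 2-2-1 equalities $\lcm(a_i,a_j)=f_1(\mathbf{a},s)$, and the hypothesis that every triple gcd divides $h$, with your parts (2)--(4) matching the paper's arguments almost verbatim (the paper handles (4) with a single uniform choice of $l\in\{0,\dots,s-1\}\setminus\{i,j\}$ rather than your split into $j\le s-2$ and $j=s-1$, but that is the same idea). The only real deviation is part (1): the paper argues directly, intersecting the two decompositions $a_i=\lcm(a_{i,s-1},a_{i,s})$ and $a_i=\lcm(a_{0,i},a_{i,s})$ (the latter coming from $\lcm(a_0,a_s)=f_1(\mathbf{a},s)$) to obtain $a_i=\lcm(a_{0,i,s-1},a_{i,s})$ and conclude $a_{i,s}\nmid h$ at once, whereas you reach the same conclusion by contradiction via $f_1(\mathbf{a},s)\mid\lcm(a_{s-1},h)$ and the forced divisibility $a_{0,s}\mid h$ --- both valid, the paper's being slightly shorter.
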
 
 
 \begin{proof}
 (1) As $a_{0, s}\nmid h$, by the assumption of this subsubcase,
 $\lcm(a_0,a_s)=f_1({\mathbf{a}}, s)$ and $a_i=\lcm(a_{0, i},a_{i,s})$ for all $1\leq i\leq s-1$.
 In particular, for all $1\leq i\leq s-2$,
 \[a_i=\gcd(\lcm(a_{i,s-1},a_{i,s}), \lcm(a_{0,i},a_{i,s}))=\lcm(a_{0, i,s-1},a_{i,s}),\]
 which implies that $a_{i,s}\nmid h$ as $a_i\nmid h$ and $a_{0, i,s-1}|h$.

 (2) This follows from $a_s=\lcm_{j\neq s}(a_{i,s})$ and $\lcm_{0\leq i\leq s-1}(a_{i,s})|h_2({\mathbf{a}}, s)$ by (1).

 (3) If $a_{i, j}\nmid h$ for some $s< i<j\leq n$, then $a_{i,j}|h_1({\mathbf{a}}, s)$ which divides $f_1({\mathbf{a}}, s)=\lcm(a_{s-1},a_s)$, so $a_{i,j}=\lcm(a_{s-1, i, j},a_{s, i,j})$ which divides $h$, a contradiction.

(4) Fix $0\leq i<j\leq s-1$. As $s\geq 3$, we can take $0\leq l\leq s-1$ such that $l\neq i,j$. Then by (1) and the assumption of this subsubcase, $\lcm(a_l,a_s)=f_1({\mathbf{a}}, s)$, which is divisible by $a_{i,j}$. Hence $a_{i,j}=\lcm(a_{i,j,l},a_{i,j,s})$ which divides $h$. 
 \end{proof}

 If $s\leq n-2$, then $a_{i,j}|h$ for any $s<i<j\leq n$ by Claim~\ref{claim 2-2-1}(3). 
 So we conclude the proposition by {Subcase 2-1} by symmetry.


Now suppose that $s\geq n-1\geq 3$, then $f_1(\mathbf{a}, s)\geq f_1(\mathbf{a}, s-1)$ and $f_2(\mathbf{a}, s)\geq f_2(\mathbf{a}, s-1)$ by Claim~\ref{claim 2-2-1}(2) and Lemma~\ref{lem s-1 s}.

If moreover condition (\hyperref[red2]{$\star$}) holds for $(\mathbf{a}, s-1)$, then we can use {Subcase 2-1} to $(\mathbf{a}, s-1)$ to conclude the proposition. Here $(\mathbf{a}, s-1)$ satisfies the assumption of {Subcase 2-1} by Claim~\ref{claim 2-2-1}(4).

If condition (\hyperref[red2]{$\star$}) fails for $(\mathbf{a}, s-1)$, then $s=n-1$ and either $a_{n}|a_s$ or $a_{s}|a_n$. If $a_{s}|a_n$, then $a_{s-1, s}=a_{s-1, s, n}$ which divides $h$, a contradiction. Hence $a_{n}|a_s$. We consider $\mathbf{b}''=(a_1,\dots, a_{n-2}, a_{n}, a_{n-1})$. Then $f_1(\mathbf{a}, s)\geq f_1(\mathbf{b}'', s)$ and $f_2(\mathbf{a}, s)\geq f_2(\mathbf{b}'', s)$ by Claim~\ref{claim 2-2-1}(2). 
Then Claim~\ref{claim 2-2-1}(4), together with the fact that $a_{j,n}=a_{j,s,n}$ divides $h$, implies that $(\mathbf{b}'', s)$ satisfies the assumption of {Subcase 2-1}.
Then we can use {Subcase 2-1} to $(\mathbf{b}'', s)$ to conclude the proposition.

 \medskip

 \textbf{Subsubcase 2-2-2}: 
 $\lcm(a_{i_0},a_{j_0})\neq f_1({\mathbf{a}}, s)$ for some $0\leq i_0<j_0\leq s$ with $a_{i_0,j_0}\nmid h$.

Recall that $f_1({\mathbf{a}}, s)=\lcm_{0\leq i\leq s}(a_i,h_1({\mathbf{a}}, s))$. 
The assumption implies that $\lcm_{0\leq l\leq s; l\neq i_0, j_0}(a_l,h_1({\mathbf{a}}, s))\nmid \lcm(a_{i_0},a_{j_0})$. On the other hand, 
we also have $\lcm(a_{i_0},a_{j_0})\nmid \lcm_{0\leq l\leq s; l\neq i_0, j_0}(a_l,h_1({\mathbf{a}}, s))$; otherwise 
\[a_{i_0, j_0}= \lcm_{0\leq l\leq s; l\neq i_0, j_0}(a_{i_0, j_0,l},\gcd(a_{i_0, j_0}, h_1({\mathbf{a}}, s)))\]
which divides $\lcm_{0\leq l\leq n; l\neq i_0, j_0}(a_{i_0, j_0,l})$, and this implies that 
$a_{i_0, j_0}|h$, a contradiction. 
So by Lemma~\ref{lcm a+b}, 
\[
f_1({\mathbf{a}}, s)\geq \lcm_{0\leq l\leq s; l\neq i_0, j_0}(a_l,h_1({\mathbf{a}}, s))+ \lcm(a_{i_0},a_{j_0})
\]
Also $\lcm(a_{i_0},a_{j_0})\geq a_{i_0}+a_{j_0}$ by condition (\hyperref[red2]{$\star$}). 
So if we denote $\mathbf{a}'$ to be the sequence deleting $a_{i_0}$ and $a_{j_0}$ from $\mathbf{a}$, then
 \begin{align*}
 {}&f_1({\mathbf{a}}, s)+f_2({\mathbf{a}}, s)-\sum_{i=0}^{n}a_i\\
 \geq &f_1({\mathbf{a}'}, s-2)+f_2({\mathbf{a}'}, s-2)-\sum_{0\leq i\leq n; i\neq i_0, j_0} a_i.
 \end{align*}

In this case we may apply the inductive hypothesis or Lemma~\ref{le} to conclude the proof, unless the following holds 
\begin{itemize}
 \item $n=4$ and $s=4$;
 \item There exists exactly one $J_{i_0, j_0}\subset \{0,\dots, 4\}\setminus \{i_0, j_0\}$ with $|J_{i_0, j_0}|=2$ such that $a_{J_{i_0, j_0}}\nmid h$;
\end{itemize}

Suppose that the proposition fails. Without loss of generality, we may assume that $a_{3,4}\nmid h$ and $\lcm(a_{3},a_{4})\neq f_1({\mathbf{a}}, 4)$, and assume that $J_{3,4}=\{0,1\}$. In particular, $a_{0,1}\nmid h$, $a_{0,2}| h$, and $a_{1,2}| h$. 
Then $a_2\nmid \lcm(a_0, a_1)$ as $\lcm(a_{0,2}, a_{1,2})|h$. This implies that 
$\lcm(a_{0},a_{1})\neq f_1({\mathbf{a}}, 4)$. So we can apply the above argument to $(i_0, j_0)=(0,1)$. As $a_{3,4}\nmid h$, we have $a_{2,3}|h$ and $a_{2,4}|h$ by the uniqueness of $J_{0, 1}$. But then 
$a_2=\lcm(a_{0,2},a_{1,2},a_{2,3},a_{2,4})$ which divides $h$, a contradiction.
\end{proof}

\section{Proof of main theorem}

\begin{proof}[Proof of Theorem~\ref{th1}]

 Write $H=\mathcal{O}_{X}(h)$ for a positive integer $h$. It suffices to show that 
there exists a triple $\{u,v,w\}\subset \{0,...,n\}$ 
such that $h\in \mathbb{Z}_{\geq 0}a_u+ \mathbb{Z}_{\geq 0}a_v+ \mathbb{Z}_{\geq 0}a_w$. Recall that $H$ is a multiple of the fundamental divisor of $X$ generating $\operatorname{Pic}(X)$.

We may assume that $a_i\nmid h$ for all $i$, otherwise the conclusion is clear. Then by \cite[Proposition 3.4]{PST17}, $a_i|d_1$ or $a_i|d_2$ for any $i$. 
By reordering $a_i$ we may assume that there exists an integer $s$ with $-1\leq s\leq n$ such that $a_i|d_1$ for any $0\leq i\leq s$ and $a_i|d_2$ for any $s<i\leq n$. 

By \cite[Proposition 3.6]{PST17}, the condition that $H$ is Cartier implies that
\begin{itemize}
 \item for any $I\subset \{0,1,...,n\}$ with $|I|=3$, $a_I|h$;
 \item for any $I\subset \{0,1,...,n\}$ with $|I|=2$, if $a_I\nmid h$, then $a_I|d_1$ and $a_I|d_2$.
\end{itemize}
In particular, the assumptions of Proposition~\ref{main prop} are satisfied and we have 
$f_1(\mathbf{a}, s)|d_1$ and $f_2(\mathbf{a}, s)|d_2$.
As $H-K_X$ is ample, 
\[
h> d_1+d_2-\sum_{i=0}^na_i\geq f_1({\mathbf{a}}, s)+f_2({\mathbf{a}}, s)-\sum_{i=0}^{n}a_i.
\]
So we get the conclusion by Proposition~\ref{main prop} and Lemma~\ref{lem frob}.
\end{proof}

\section*{Acknowledgments} 
This work was supported by National Key Research and Development Program of China \#2023YFA1010600, NSFC for Innovative Research Groups \#12121001, National Key Research and Development Program of China \#2020YFA0713200. The first author is a member of LMNS, Fudan University. We thank the referee for useful comments and suggestions.

\end{document}